\begin{document}

\newtheorem{thm}{Theorem}[section]
\newtheorem{asm}{Assumption}[section]
\newtheorem{prop}[thm]{Proposition}
\newtheorem{lemma}[thm]{Lemma}
\newtheorem{cor}[thm]{Corollary}
\newtheorem{dfn}{Definition}[section]
\newtheorem{exm}{Example}[section]
\newtheorem{rmk}{Remark}[section]
\allowdisplaybreaks

\title{Finite sample properties of parametric MMD estimation:
\\
robustness to misspecification and dependence
}
\author{Badr-Eddine Ch\'erief-Abdellatif $^{(1)}$ \& Pierre Alquier $^{(2)}$
\\
\small{(1) CNRS and (2) ESSEC Business School}}
\date{\today}

\maketitle

\begin{abstract}
Many works in statistics aim at designing a universal estimation procedure, that is, an estimator that would converge to the best approximation of the (unknown) data generating distribution in a model, without any assumption on this distribution. This question is of major interest, in particular because the universality property leads to the robustness of the estimator. In this paper, we tackle the problem of universal estimation using a minimum distance estimator presented in~\cite{Briol2019} based on the Maximum Mean Discrepancy. We show that the estimator is robust to both dependence and to the presence of outliers in the dataset. Finally, we provide a theoretical study of the stochastic gradient descent algorithm used to compute the estimator, and we support our findings with numerical simulations.

\textcolor{red}{The proof of Proposition~\ref{prop:beta} in the published version contains a mistake. The mistake is fixed here (and the bound is actually improved by a factor $2$).}
\end{abstract}

\section{Introduction}
  
One of the main challenges in statistics is the design of a \textit{universal} estimation procedure. Given data, a universal procedure is an algorithm that provides an estimator of the generating distribution which is simultaneously statistically consistent when the true distribution belongs to the model, and robust otherwise. Typically, a universal estimator is consistent for any model, with minimax-optimal or fast rates of convergence and is robust to small departures from the model assumptions \cite{BickelRobust1976} such as sparse instead of dense effects or non-Gaussian errors in high dimensional linear regression. Unfortunately, most statistical procedures are based upon strong assumptions on the model or on the corresponding parameter set, and very famous estimation methods such as maximum likelihood estimation (MLE), method of moments or Bayesian posterior inference may fail even on simple problems when such assumptions do not hold. For instance, even though MLE is consistent and asymptotically normal with optimal rates of convergence in parametric estimation under suitable regularity assumptions \cite{LeCamMLE70,VanderVaartMLE98} and in nonparametric estimation under entropy conditions, this method behaves poorly in case of misspecification when the true generating distribution of the data does not belong to the chosen model.

Let us investigate a simple example presented in \cite{Birge2006ModelSelectionTesting} that illustrates the non-universal characteristic of MLE. We observe a collection of $n$ independent and identically distributed (i.i.d) random variables $X_1,...,X_n$ that are distributed according to some mixture distribution $P^0_n = (1-2n^{-1})\mathcal{U}([0,1/10]) + 2n^{-1} \mathcal{U}([1/10,9/10])$ where $\mathcal{U}([a,b])$ is the uniform distribution between $a$ and $b$. We consider the parametric model of independent uniform distributions $\mathcal{U}([0,\theta])$, $0 \leq \theta < 1$, and we choose the squared Hellinger distance $h^2(\cdot,\cdot)$ as the risk measure. Here the maximum likelihood is the maximum of the observations $X_{(n)}:=\max(X_1,...,X_n)$, and $\mathcal{U}([0,1/10])$ is a good approximation of the generating distribution $P^0_n$ as $h^2(P^0_n,\mathcal{U}([0,1/10])) < 5/4n$ for $n\geq4$. Hence, one would expect that $\mathbb{E}[h^2(P^0_n,\mathcal{U}([0,X_{(n)}]))]$ goes to $0$ as $n\rightarrow +\infty$, which is actually not the case. We do not even have consistency: $\mathbb{E}[h^2(P^0_n,\mathcal{U}([0,X_{(n)}]))]>0.38$. Hence, the MLE is not robust to this small deviation from the parametric assumption. 
The same happens in Bayesian statistics: the regular posterior distribution is not always robust to model misspecification. Indeed, authors of \cite{Barronetal1999,grunwaldmisspecifiation} show pathologic cases where the posterior does not concentrate to the true distribution.

Universal estimation is all the more important since it provides a generic approach to tackle the more and more popular problem of robustness to outliers under the i.i.d assumption, although definitions and goals involved in robust statistics are quite different from the universal estimation perspective. H\"uber introduced a framework that models situations where a small fraction $\varepsilon$ of data is contaminated, and he assumes that the true generated distribution can be written $(1-\varepsilon)P_{\theta_0}+\varepsilon Q$ where $Q$ is the contaminating distribution and $\varepsilon$ is the proportion of corrupted observations \cite{H\"uberRobustness}. The goal when using this approach is to estimate the true parameter $\theta_0$ given a misspecified model $\{P_{\theta}/\theta\in\Theta\}$ with $\theta_0 \in \Theta$. A procedure is then said to be robust in this case if it leads to a good estimation of the true parameter $\theta_0$. More generally, when a procedure is able to provide a good estimate of the generating distribution of i.i.d data when a small proportion of them is corrupted, whatever the values of these outliers, then such an estimator is considered as robust.

Interestingly enough, none of the aforementioned works questioned the independence assumption on the observation. We believe that a universal estimation procedure should still produce sensible estimations under small deviations from this assumption.

\subsection{Related work}

Several authors attempted to design a general universal estimation method. Sture Holm \cite{BickelRobust1976} suggested that Minimum Distance Estimators (MDE) were the most natural procedures being robust to misspecification. Motivated by \cite{WolfowitzMDE1957,Parr80}, MDE consists in minimizing some probability distance $d$ between the empirical distribution and a distribution in the model. The MDE $\hat{\theta}_n$ is defined by:
$$
d(\hat{P}_n,P_{\hat{\theta}_n}) = \inf_{\theta \in \Theta} d(\hat{P}_n,P_{\theta})
$$
where $\hat{P}_n=n^{-1}\sum_{i=1}^n\delta_{\{X_i\}}$ is the empirical measure and $\Theta$ the parameter set associated to the model. If the minimum does not exist, then one can consider an $\varepsilon$-approximate solution. In fact, this minimum distance estimator is used in many usual procedures. Indeed, the generalized method of moments \cite{Hansen1982} is actually defined as minimizing the weighted Euclidean distance between moments of $\hat{P}_n$ and $P_{\theta}$ while the MLE minimizes the KL divergence, at least for discrete measures. When the distance $d$ is wisely chosen, e.g. when it is bounded, then MDE can be robust and consistent.

A popular metric is the Total Variation (TV) distance \cite{Yatracos1985,DevroyeL1}. \cite{Yatracos1985} built an estimator that is uniformly consistent in TV distance and is robust to misspecification under the i.i.d assumption, but without any assumption on the true distribution of the data. The rate of convergence depends on the Kolmogorov entropy of the model. A few decades later, Devroye and Lugosi studied in details the skeleton estimate, a variant of the estimator of \cite{Yatracos1985} that is based on the TV-distance restricted to the so-called Yatracos sets, see \cite{DevroyeL1}. Unfortunately, the skeleton estimate and the original Yatracos estimate are not computationally tractable.

In \cite{BaraudBirge2016RhoEstimators} and \cite{BaraudBirgeSart2017RhoEstimators}, Baraud, Birg\'e and Sart introduced $\rho$-estimation, a universal method that retains some appealing properties of the MLE such as efficiency under some regularity assumptions, while being robust to deviations, measured by the Hellinger distance. This $\rho$-estimation procedure is inspired from T-estimation \cite{Birge2006ModelSelectionTesting}, itself inspired from earlier works of Le Cam \cite{LeCam73,LeCam75} and Birg\'e \cite{Birge83}, and goes beyond the classical compactness assumption used in T-estimation. In compact models, $\rho$-estimators can be seen as variants of T-estimators also based on robust tests, but they can be extended to noncompact models such as linear regression with fixed or random design with various error distributions. As T-estimators, they enjoy robustness properties, but involve other metric dimensions which lead to optimal rates of convergence with respect to the Hellinger distance even in cases where T-estimators can not be defined. Moreover, when the sample size is large enough, $\rho$-estimation recovers the usual MLE in density estimation when the model is parametric, well-specified and regular enough. Hence, $\rho$-estimation can be seen as a robust version of the MLE. Unfortunately, this strategy is also intractable.


More recently, \cite{Briol2019} showed that using the Maximum Mean Discrepancy (MMD) \cite{Gretton2012} to build a minimum distance estimator leads to both robust estimation in the i.i.d case, without any assumption on the model $\{P_\theta,\theta\in\Theta\}$. Moreover, this estimator is tractable as soon as the model is generative, that is, when one can sample efficiently from any $P_\theta$. MMD, a metric based on embeddings of probability measures into a reproducing kernel Hilbert space, has been applied successfully in a wide range of problems such as kernel Bayesian inference \cite{Song2011}, approximate Bayesian computation \cite{Park2016}, two-sample \cite{Gretton2012} and goodness-of-fit testing \cite{Jit17}, and MMD GANs \cite{Roy2015,li15} and autoencoders~\cite{Zhao2017}, to name a few prominent examples. Such minimum MMD-based estimators are proven to be consistent, asymptotically normal and robust to model misspecification. The trade-off between the statistical efficiency and the robustness is made through the choice of the kernel. The authors investigated the geometry induced by the MMD on a finite-dimensional parameter space and introduced a (natural) gradient descent algorithm for efficient computation of the estimator. This algorithm is inspired from  the stochastic gradient descent (SGD) used in the context of MMD GANs where the usual discriminator is replaced with a two-sample test based on MMD \cite{Roy2015}. These results were extended in the Bayesian framework by~\cite{BECA2019}.

Finally, a whole branch of probability and statistics study limit theorems (LLN, CLT) under the assumptions that the data is not exactly independent, but that in some sense, the dependence between the observations is not strong. Since the seminal work of~\cite{mixing0}, many mixing conditions, that is, restrictions on the dependence between observations, were defined. These conditions lead to limit theorems useful to analyze the asymptotic behavior of estimators computed on time series~\cite{mixing}. Nevertheless, checking mixing assumptions is difficult in practice and many classes of processes that are of interest in statistics such as elementary Markov chains are sometimes not mixing. More recently, \cite{DependenceDoukhan1999} proposed a new weak dependence condition for time series that is built on covariance-based coefficients which are much easier to compute than mixing ones, and that is more general than mixing as it stands for most relevant classes of processes. We believe that it is important to study robust estimators in this setting, in order to check that they are also robust from small deviations to the independence assumption.

\subsection{Contributions}

In this paper, we further investigate universality properties of minimum distance estimation based on MMD distance \cite{Briol2019}. Inspired by the related literature, our contributions in this paper are the following:
\begin{itemize}
\item We go beyond the classical i.i.d framework. Indeed, we prove that the the estimator is robust to dependence between observations. To do so, we introduce a new dependence coefficient expressed as a covariance in some reproducing kernel Hilbert space, and which is very simple to use in practice.
\item We show that our oracle inequalities imply robust estimation under the i.i.d assumption in the H\"uber contamination model and in the case of adversarial contamination.
\item We propose a theoretical analysis of the SGD algorithm used to compute this estimator in \cite{Briol2019} and \cite{Roy2015} for some finite dimensional models. Thanks to this algorithm, we provide numerical simulations to illustrate our theoretical results.
\end{itemize}


The first result of this paper is a generalization bound in the non-i.i.d setting. It states that under a very general dependence assumption, the generalization error with respect to the MMD distance decreases in $n^{-1/2}$ as $n\rightarrow +\infty$. This result extends the inequalities in \cite{Briol2019} that are only available in the i.i.d framework, and is obtained using dependence concepts for stochastic processes. We introduce in this paper a new dependence coefficient in the wake of \cite{DependenceDoukhan1999} which can be expressed as a covariance in some reproducing kernel Hilbert space associated with MMD. This coefficient can be easily computed in many situations and which may be related to usual mixing coefficients such as the popular $\beta$-mixing one. We show that a weak assumption on this new dependence coefficient can relax the i.i.d assumption of \cite{Briol2019} and can lead to valid generalization bounds even in the dependent setting.

Regarding robustness, we prove that our generalization bounds for the MMD estimator implies that this estimator is robust to the presence of outliers. Note that this includes H\"uber's type contamination, and adversarial contamination as well. In particular, we compare the rate of convergence of the MMD estimator to the minimax estimators in the example of the estimation of the mean of a Gaussian.

Regarding computational issues, we provide a Stochastic Gradient Descent (SGD) algorithm as in \cite{Briol2019,Roy2015} involving a U-statistic approximation of the expectation in the formula of the MMD distance. We theoretically analyze this algorithm in parametric estimation using a convex parameter set. We also perform numerical simulations that illustrate the efficiency of our method, especially by testing the behavior of the algorithm in the presence of outliers.

The rest of the paper is organized as follows. Section \ref{sec:notations} defines the MMD-based minimum distance estimator and our new dependence coefficient based on the kernel mean embedding. Section \ref{sec:main-res} provides nonasymptotic bounds in the dependent and misspecified framework, with their implications in terms of robust parametric estimation. Section \ref{sec:examples} illustrates the efficiency of our method in several different frameworks. We finally present an SGD algorithm with theoretical convergence guarantees in Section \ref{sec:algo} and we perform numerical simulations in Section \ref{sec:simu}. The proofs of the theorems of Section~\ref{sec:main-res} are provided in Section \ref{sec:proofs}. The supplementary material is dedicated to the remaining proofs.

 \section{Background and definitions}
\label{sec:notations}
 
In this section, we first introduce some notations and present the statistical setting of the paper in Section 2.1. Then, we remind in Section 2.2 some theory on reproducing kernel Hilbert spaces (RKHS) and we define both the maximum mean discrepancy (MMD) and our minimum distance estimator based on the MMD. Finally, we introduce in Section 2.3 a new dependence coefficient expressed as a covariance in a RKHS.

\subsection{Statistical setting}

We shall consider a dependent setting throughout the paper. We observe in a measurable space $\big( \mathbb{X},\mathcal{X} \big)$ a collection of $n$ random variables $X_1$,...,$X_n$ generated from a stationary process. This implies that the $X_i$'s are identically distributed, and we will let $P^0$ denote their marginal distribution. Note that this include as an example the case where the $X_i$'s are i.i.d with generating distribution $P^0$. We introduce a statistical model $\{ P_{\theta}/ \theta \in \Theta \}$ indexed by a parameter space $\Theta$.

\subsection{Maximum Mean Discrepancy}

We consider a positive definite kernel function $k$, i.e a symmetric function $k : \mathbb{X} \times  \mathbb{X} \rightarrow \mathbb{R}$ such that for any integer $n\geq 1$, for any $x_1,...,x_n \in \mathbb{X}$ and for any $c_1,...,c_n \in \mathbb{R}$:
$$
\sum_{i=1}^n \sum_{j=1}^n c_i c_j k(x_i,x_j) \geq 0.
$$
We then consider the reproducing kernel Hilbert space (RKHS) $({\mathcal{H}_{k}},\langle\cdot,\cdot\rangle_{\mathcal{H}_{k}})$ associated with the kernel $k$ which satisfies the reproducing property $f(x)=\langle f,  k(x,\cdot)\rangle_{\mathcal{H}_{k}}$ for any function $f \in {\mathcal{H}_{k}}$ and any $x \in \mathbb{X}$. From now on, we assume that the kernel is bounded by some positive constant, that will be assumed to be $1$ without loss of generality. Thas is, for any $x,y \in \mathbb{X}$, $|k(x,y)|\leq1$. 


Now we introduce the notion of \textit{kernel mean embedding}, a Hilbert space embedding of a probability measure that can be viewed as a generalization of the original feature map used in support vector machines and other kernel methods. Given a probability measure $P$, we define the mean embedding $\mu_P \in {\mathcal{H}_{k}}$ as:
$$
\mu_P(\cdot) := \mathbb{E}_{X\sim P}[k(X,\cdot)] \in {\mathcal{H}_{k}} .
$$
All the applications and the theoretical properties of those embeddings have been well studied \cite{Fuku2017}. In particular, the mean embedding $\mu_P$ satisfies the relationship $\mathbb{E}_{X\sim P}[f(X)] = \langle f,  \mu_P\rangle_{\mathcal{H}_{k}}$ for any function $f \in {\mathcal{H}_{k}}$, and induces a semi-metric \footnote{ This means that $P \rightarrow \| \mu_P \|_{\mathcal{H}_{k}}$ satisfies the requirements of a norm besides $ \| \mu_P - \mu_Q \|_{\mathcal{H}_{k}} = 0 $ only for $\mu_P=\mu_Q$. } on measures called maximum mean discrepancy (MMD), defined for two measures $P$ and $Q$ as follows:
$$
\mathbb{D}_k(P,Q) = \| \mu_P - \mu_Q \|_{\mathcal{H}_{k}}
$$
or equivalently
$$
\mathbb{D}_k^2(P,Q) = \mathbb{E}_{X,X' \sim P}[k(X,X')] - 2 \mathbb{E}_{X\sim P,Y\sim Q}[k(X,Y)] + \mathbb{E}_{Y,Y'\sim Q}[k(Y,Y')] .
$$
A kernel $k$ is said to be characteristic if $P\mapsto \mu_P$ is injective. This ensures that $\mathcal{D}_k$ is a metric, and not only a semi-metric. Subsection 3.3.1 of the thorough survey \cite{Fuku2017} provides a wide range of conditions ensuring that $k$ is characteristic. They also provide many examples of characteristic kernels, see their Table 3.1. Among others, when $\mathbb{X} \subset \mathbb{R}^d$ equiped with the Euclidean norm $\|\cdot\|$, the Gaussian kernel $k(x,y) = \exp(-\|x-y\|^2/\gamma^2)$ and the Laplace kernel $k(x,y) = \exp(-\|x-y\|/\gamma)$, are known to be characteristic. We actually mostly use these two kernels in our applications. From now on, we will assume that $k$ is characteristic.

Note that there are many applications of the kernel mean embedding and MMD in statistics such as two-sample testing \cite{Gretton2012}, change-point detection \cite{Arlot2012}, detection \cite{MONK19}, we also refer the reader to~\cite{Vert2019} for a thorough introduction to the applications of kernels and MMD to computationnal biology .

Here, we will focus on estimation of parameters based on MMD. This principle was used to train generative networks \cite{Roy2015,li15}, it's only recently that it was studied as a general principle for estimation \cite{Briol2019}. Following these papers we define the MMD estimator $\hat{\theta}_n$ such that:
$$
\mathbb{D}_k(P_{\hat{\theta}_n},\hat{P}_n) = \inf_{\theta \in \Theta} \mathbb{D}_k(P_{\theta},\hat{P}_n)
$$
where $\hat{P}_n=(1/n)\sum_{i=1}^n \delta_{X_i}$ is the empirical measure, i.e.:
$$
\hat{\theta}_n = \underset{\theta \in \Theta}{\arg\min} \, \bigg\{
\mathbb{E}_{X,X' \sim P_\theta}[k(X,X')] - \frac{2}{n} \sum_{i=1}^n \mathbb{E}_{X\sim P_\theta}[k(X,X_i)] \bigg\}.
$$
It could be that there is no minimizer, see the discussion in Theorem 1 page 9 in~\cite{Briol2019}. In this case, we can use an approximate minimizer. More precisely, for any $\varepsilon>0$ we can always find a $\hat{\theta}_{n,\varepsilon}$ such that:
$$
\mathbb{D}_k(P_{\hat{\theta}_{n,\varepsilon}},\hat{P}_n) \leq \inf_{\theta \in \Theta} \mathbb{D}_k(P_{\theta},\hat{P}_n) + \varepsilon.
$$
In what follows, we will consider the case where the minimizer exists (that is, $\varepsilon=0$) but when this is not the case, everything can be easily extended by considering $\hat{\theta}_{n,1/n}$.

\subsection{Covariances in RKHS}

In this subsection, we introduce and discuss a new dependence coefficient based on the kernel mean embedding. This coefficient allows to go beyond the i.i.d case in the study of the MMD estimator of \cite{Briol2019}, and to show that it is actually robust to dependence.

\begin{dfn}
 \label{dfn.varrho}
 We define, for any $t\in\mathbb{N}$,
 $$ \varrho_t = \left| \mathbb{E}\left< k(X_t,\cdot)-\mu_{P^0},k(X_0,\cdot)-\mu_{P^0} \right>_{\mathcal{H}_k} \right| . $$
\end{dfn}
In the i.i.d case, note that $\varrho_t = 0$ for any $t\geq 1$. In general, the following assumption will ensure the consistency of our estimator:
\begin{asm}
 \label{asm:our:mixing}
 There is a $\Sigma < + \infty$ such that, for any $n$, $\sum_{t=1}^n \varrho_t \leq \Sigma$.
\end{asm}

Our mean embedding dependence coefficient may be seen as a covariance expressed in the RKHS $\mathcal{H}_{k}$. We shall see throughout the paper that the kernel mean embedding coefficient $\varrho_t$ can be easily computed in many situations, and that it is closely related to widely used mixing coefficients. In particular, we will show in Section 4.2 that our coefficient $\varrho_t$ is upper-bounded by the popular $\beta$-mixing coefficient. For the reader who would not be familiar with $\beta$-mixing, we also show that any real-valued auto-regressive process $X_t = a X_{t-1} + \varepsilon_t$ satisfies Assumption~\ref{asm:our:mixing} as long as $|a|<1$, the $\varepsilon_t$ are i.i.d and $\mathbb{E}(|\varepsilon_0|)<\infty$. Also, we show that some special cases of such auto-regressive processes are not $\beta$-mixing, which proves that Assumption~\ref{asm:our:mixing} is more general than $\beta$-mixing: an explicit example is given in Subsection~\ref{subsec:ar}. Hence, Assumption \ref{asm:our:mixing} may be referred to as a weak dependence condition in the wake of the concept of weak dependence introduced in \cite{DependenceDoukhan1999}. We will show in the next section that under Assumption \ref{asm:our:mixing}, we can obtain a nonasymptotic generalization bound of the same order than in the i.i.d case.

 \section{Nonasymptotic bounds in the dependent, misspecified case}
\label{sec:main-res}
 
In this section, we provide nonasymptotic generalization bounds in MMD distance for the minimum MMD estimator. In particular, we show in Subsection~\ref{subsec:mmd} that under a weak dependence assumption, it is robust to both dependence and misspecification, and is consistent at the same $n^{-1/2}$ rate than in the i.i.d case. In particular, we give explicit bounds in the H\"uber contamination model and in a more general adversarial setting in Subsection~\ref{subsec:robust}.

\subsection{Estimation with respect to the MMD distance}
\label{subsec:mmd}

First, we begin with a theorem that gives an upper bound on the generalization error, i.e the expectation of $\mathbb{D}_k(P_{\hat{\theta}_n},P^0)$. The rate of convergence of this error is of order $n^{-1/2}$ independently of the dimension of the parameter space $\Theta$. In fact, note that there is actually no assumption at all on the model $\{P_{\theta},\theta\in\Theta\}$ in this theorem.
\begin{thm}
\label{theorem:mmd:1}
 We have: $$ \mathbb{E} \left[  \mathbb{D}_k\left(P_{\hat{\theta}_n},P^0 \right) \right]  \leq \inf_{\theta\in\Theta}  \mathbb{D}_k\left(P_{\theta},P^0 \right) + 2 \sqrt{ \frac{1+2\sum_{t=1}^{n}\varrho_t}{n}} . $$
\end{thm}
As a consequence, under Assumption~\ref{asm:our:mixing}:
 $$ \mathbb{E} \left[  \mathbb{D}_k\left(P_{\hat{\theta}_n},P^0 \right) \right]  \leq \inf_{\theta\in\Theta}  \mathbb{D}_k\left(P_{\theta},P^0 \right) + 2 \sqrt{ \frac{1+2\Sigma }{n}} . $$
We remind that the proofs of the results in this section are deferred to Section~\ref{sec:proofs}. It is also possible to provide a result that holds with large probability as in \cite{Briol2019,Roy2015}. Naturally, this requires stronger assumptions, and the conditions on the dependence become more intricate in this case. Here, we use a condition introduced in \cite{McDoRIO,McDoRIO2} for generic metric spaces that we adapt to the kernel embedding and to stationarity:
\begin{asm}
\label{asm:gamma:mixing}
Assume that there is a family $(\gamma_{\ell})_\ell$ of non-negative numbers such that, for any integer $n$, for any $\ell\in\{1,\dots,n-1\} $ and any function $g:\mathcal{H}_k^\ell \rightarrow \mathbb{R} $ such that
$$ |g(a_1,\dots,a_\ell) - g(b_1,\dots,b_\ell)| \leq \sum_{i=1}^\ell \|a_i - b_i\|_{\mathcal{H}_k} , $$
we have: $ |\mathbb{E}[g(\mu_{\delta_{X_{\ell+1}}},\dots,\mu_{\delta_{X_{n}}})|X_{1},\dots,X_{\ell}]-\mathbb{E}[g(\mu_{\delta_{X_{\ell+1}}},\dots,\mu_{\delta_{X_{n}}})]| \leq \gamma_{1} + \dots + \gamma_{n+\ell-1} $, almost surely. Assume that $\Gamma:= \sum_{\ell\geq 1} \gamma_{\ell} < \infty $.
\end{asm}
This assumption is more technical than Assumption~\ref{asm:our:mixing}. The idea is quite similar: the coefficient $\gamma_s$ is a measure of the dependence between $X_t$ and $X_{t+s}$, and the assumption will be satisfied if $X_t$ and $X_{t+s}$ are ``almost independent'' when $s$ is large -- but the sense given to ``almost independent'' is not exactly the same as in Assumption~\ref{asm:our:mixing}. For example, we show in Subsection~\ref{subsec:ar} that auto-regressive processes $X_{t+1}=a X_t + \varepsilon_{t+1}$ with $|a|<1$ and i.i.d $\varepsilon_t$ satisfy this assumption under the additional condition that the $\varepsilon_t$ are almost surely bounded. Again, note that in the case of independence, we can take all the $\gamma_{i}=0$ and hence $\Gamma=0$ in addition to $\Sigma=0$. We can now state our result in probability:
\begin{thm}
\label{theorem:mmd:briol:improved}
Assume that Assumptions~\ref{asm:our:mixing} and~\ref{asm:gamma:mixing} are satisfied. Then, for any $\delta\in(0,1)$,
 $$ \mathbb{P} \left[ \mathbb{D}_k\left( P_{\hat{\theta}_n},P^0 \right) \leq  \inf_{\theta\in\Theta} \mathbb{D}_k\left( P_{\theta},P^0 \right)
 + 2 \frac{\sqrt{1+2\Sigma} + (1+\Gamma)\sqrt{2\log\left(\frac{1}{\delta}\right)} }{\sqrt{n}} \right] \geq 1-\delta.$$
\end{thm}
Assumption \ref{asm:gamma:mixing} is fundamental to obtain a result in probability. Indeed, the rate of convergence in Theorem  \ref{theorem:mmd:briol:improved} is characterized by some concentration inequality upper bounding the MMD distance between the empirical and the true distribution as done in \cite{Briol2019}. Nevertheless, the proof of this inequality in \cite{Briol2019} is based on a Hoeffding-type inequality known as McDiarmid's inequality \cite{McDo} that is only valid for independent variables (that is, all the $\gamma_i=0$), which makes this inequality not applicable in our dependent setting. Hence we use a version of McDiarmid's inequality for time series obtained by Rio \cite{McDoRIO,McDoRIO2} which is available under the assumption that $\sum_{\ell\geq 1} \gamma_{\ell} < \infty$ (Assumption \ref{asm:gamma:mixing}).

\begin{rmk}[The i.i.d case]
 Note that when the $X_i$'s are i.i.d, Assumptions~\ref{asm:our:mixing} and~\ref{asm:gamma:mixing} are always satisfied with $\Sigma=\Gamma=0$ and thus Theorem~\ref{theorem:mmd:1} gives simply
 $$ \mathbb{E} \left[  \mathbb{D}_k\left(P_{\hat{\theta}_n},P^0 \right) \right]  \leq \inf_{\theta\in\Theta}  \mathbb{D}_k\left(P_{\theta},P^0 \right) +\frac{2}{\sqrt{n}}  $$
 while Theorem~\ref{theorem:mmd:briol:improved} gives
 $$ \mathbb{P} \left[ \mathbb{D}_k\left( P_{\hat{\theta}_n},P^0 \right) \leq  \inf_{\theta\in\Theta} \mathbb{D}_k\left( P_{\theta},P^0 \right)
 + 2 \frac{1 + \sqrt{2\log\left(\frac{1}{\delta}\right)} }{\sqrt{n}} \right] \geq 1-\delta.$$ 
\end{rmk}

\begin{rmk}[Connection between the MMD distance and the $L^2$ norm]
\label{rmk:L2}
In Section~\ref{sec:examples}, we study the connection between the convergence of $\hat{P}_{\hat{\theta}_n}$ in terms of MMD distance, and the convergence of $\hat{\theta}_n$, is some classical models. However, it is also worth mentioning a connection between the MMD distance and the quadratic distance on densities. Indeed, assume $\mathbb{X} = \mathbb{R}^d$ and that $P$ and $Q$ have density $p$ and $q$ respectively with respect to the Lebesgue measure. Using the Gaussian kernel $k_\gamma(x,y) = \exp(-\|x-y\|^2/\gamma^2)$, we expect that, when $\gamma \rightarrow 0$, under suitable assumptions,
$$ \mathbb{E}_{X\sim P, Y\sim Q}[k(X,Y)] \sim \pi^{\frac{d}{2}} \gamma^d \int p(x) q(x) {\rm d}x  $$
and so that
\begin{equation}
\label{equa:lien:l2}
\mathbb{D}_{k_\gamma}(P,Q) \sim \pi^{\frac{d}{4}} \gamma^{\frac{d}{2}} \|p-q\|_{L^2}.
\end{equation}
Corollary 4 page 1527 of~\cite{L2} provides a formal statement of this claim. Thus, the convergence in the MMD distance has connections with the convergence of the densities (when they exist) in $L^2$.

Note that~\cite{DevroyeL1,BaraudBirge2016RhoEstimators} argue that the $L^2$-norm is not suitable for universal estimation: indeed, in some models, $P_\theta$ does not have a density with respect to the Lebesgue measure. But~\eqref{equa:lien:l2} allows the interpretation of the MMD distance (with the Gaussian kernel) as an approximation of the $L^2$ distance, that is however well defined for {\it any} model $(P_\theta)$.
\end{rmk}

\subsection{Robust parametric estimation}
\label{subsec:robust}

\subsubsection{Contamination models}

As explained in the introduction, when all observations but a small proportion of them are sampled independently from a generating distribution $P_{\theta_0}$ ($\theta_0 \in \Theta$), robust parametric estimation consists in finding estimators being both rate optimal and resistant to outliers. Two among the most popular frameworks for studying robust estimation are the so-called H\"uber's contamination model and the adversarial contamination model. 

H\"uber's contamination model is as follows. We observe a collection of random variables $X_1,...,X_n$. We consider a contamination rate $\varepsilon\in(0,1/2)$, latent i.i.d random variables $Z_1,...,Z_n \sim \text{Ber}(\varepsilon)$ and some noise distribution $Q$, such that the distribution of $X_i$ given $Z_i=0$ is $P_{\theta_0}$, and that the distribution of $X_i$ given $Z_i=1$ is $Q$. Hence, the observations $X_i$'s are independent and sampled from the mixture $P^0=(1-\varepsilon)P_{\theta_0}+\varepsilon Q$.

The adversarial model is more general. Contrary to H\"uber's contamination where outliers were all sampled from the contaminating distribution, we do not make any particular assumption on the outliers here. Hence, we shall adopt slightly different notations. We assume that $X_1,\dots,X_n$ are identically distributed from $P_{\theta_0} $ for some $\theta_0\in\Theta$. However, the statistician only observes $\tilde{X_1},\dots,\tilde{X_n}$ where $\tilde{X}_i$ can be any arbitrary value for $i\in \mathcal{O}$, where $\mathcal{O}$ is an arbitrary set subject to the constraint $|\mathcal{O}| \leq \varepsilon n$, and $\tilde{X}_i=X_i$ for $i\notin \mathcal{O}$. The estimators are built based on these observations $\tilde{X_1},\dots,\tilde{X_n}$.

\subsubsection{Literature}

One hot research trend in robust statistics is focused on the search of both statistically optimal and computationally tractable procedures for the Gaussian mean estimation problem $\{P_\theta=\mathcal{N}(\theta,I_d)/\theta \in \mathbb{R}^d\}$ in the presence of outliers under the i.i.d assumption, which remains a major challenge. Usual robust estimators such as the coordinatewise median and the geometric median are known to be suboptimal in this case, and there is a need to look at more complex estimators such as Tukey's median that achieves the minimax optimal rate of convergence $\max(\frac{d}{n},\varepsilon^2)$ with respect to the squared Euclidean distance, where $d$ is the dimension, $n$ is the sample size and $\varepsilon$ is the proportion of corrupted data. Unfortunately, computation of Tukey's median is not tractable and even approximate algorithms lead to an $\mathcal{O}(n^d)$ complexity \cite{ChanTukey2004,AmentaTukey2000}. This has led to the rise of the recent studies in robust statistics which address how to build robust and optimal statistical procedures, in the wake of the works of \cite{Tukey1975} and \cite{H\"uberRobustness}, but that are also computationally efficient.

This research area started with two seminal works presenting two procedures for the normal mean estimation problem: the \textit{iterative filtering} \cite{DiakonikolasRobust2016} and the \textit{dimension halving} \cite{LaiRaoVempala2016}. These algorithms are based upon the idea of using higher moments in order to obtain a good robust moment estimation, and are minimax optimal up to a poly-logarithmic factor in polynomial time. This idea was then used in several other problems in robust statistics, for instance in sparse functionals estimation \cite{DuRobustFunctionals}, clustering \cite{KothariRobustClustering}, mixtures of spherical Gaussians learning \cite{DiakonikolasExtension1}, and robust linear regression \cite{DiakonikolasExtension2}. In H\"uber's contamination model, \cite{ColDal17a} achieves the minimax rate without any extra factor in the $\varepsilon = \mathcal{O}(\min(d^{-1/2},n^{-1/4}))$ regime with an improved overall complexity. Meanwhile, \cite{ChaoGaoRobustGAN2019} offers a different perspective on robust estimation and connects the robust normal mean estimation problem with Generative Adversarial Networks (GANs) \cite{GoodfellowGAN2014,BiauGAN2018}, what enables computing robust estimators using efficient tools developed for training GANs. Hence, the authors compute depth-like estimators that retain the same appealing robustness properties than Tukey's median and that can be trained using stochastic gradient descent (SGD) algorithms that were originally designed for GANs.

Another popular approach for the more general problem of mean estimation under the i.i.d assumption in the presence of outliers is the study of finite-sample sub-Gaussian deviation bounds. Indeed, designing estimators achieving sub-Gaussian performance under minimal assumptions ensures robustness to outliers that are inevitably present when the generating distribution is heavy-tailed. In the univariate case, some estimators present a sub-Gaussian behavior for all distributions under first and second order moments. A simple but powerful strategy, the Median-of-Means (MOM), dates back to \cite{Nemi1983,Jer86,Alon1999}. This method consists in randomly splitting the data into several equal-size blocks, then computing the empirical mean within each block, and finally taking the median of them. Most MOM-based procedures lead to estimators that are simultaneously statistically optimal \cite{Lugosi2016,MOM1,Lecue2018,MONK19,chinot2019} and computationally efficient \cite{Hopkins2019,chera2019,depersin2019}. Moreover, this approach can be easily extended to the multivariate case \cite{Minsker2015,Hsu2016}. An important advantage is that the MOM estimator has good performance even for distributions with infinite variance. An elegant alternative to the MOM strategy is due to Catoni, whose estimator is based on PAC-Bayesian truncation in order to mitigate heavy tails \cite{Catoni2012}. It has the same performance guarantees than the MOM method but with sharper and near-optimal constants. In \cite{CatoniGiulini2017}, Catoni and Giulini proposed a very simple and trivial-to-compute multidimensional extension of Catoni's M-estimator defined as an empirical average of the data, with the observations with large norm shrunk towards zero, and that still satisfies a sub-Gaussian concentration using PAC-Bayes inequalities. The influence function of Catoni and Giulini has been widely used since then, see \cite{Ilaria2017,Ilaria2018,Holland2019a,Holland2019b,Haddouche2020}. We refer the reader to the beautiful review of \cite{Lugosi2019mean} for more details on those mean estimation procedures.

\subsubsection{Robust MMD estimation}

In this section, we show the properties of our MMD-based estimator in robust parametric estimation with outliers, both in H\"uber's contamination model and in the adversarial case. Our bounds are obtained by working directly in the RKHS rather than in the parameter space. the consequence of these results in terms of the Euclidean distance in the parameter space will be explored in Section~\ref{sec:examples}. 

First we consider H\"uber's contamination model \cite{H\"uberRobustness}. The objective is to estimate $P_{\theta_0}$ by observing contaminated random variables $X_1$, ..., $X_n$ with actual distribution is $P^0 = (1-\alpha) P_{\theta_0} + \alpha Q $ for some $Q$, and some $0\leq \alpha \leq \varepsilon$. We state the key following lemma:

\begin{lemma}
\label{lemma:huber}
We have, for any $\theta\in\Theta$, $ | \mathbb{D}_k(P_{\theta},P^0) - \mathbb{D}_k (P_{\theta},P_{\theta_0})| \leq 2 \varepsilon$.
\end{lemma}

Applying Lemma~\ref{lemma:huber} to the left-hand side, and to the right-hand side, of Theorem~\ref{theorem:mmd:1}, we have the following result.

\begin{cor}
Assume that $X_1,\dots,X_n$ are identically distributed from $P^0 = (1-\alpha) P_{\theta_0} + \alpha Q $ for some $\theta_0\in\Theta$, some $Q$, with $0\leq \alpha \leq \varepsilon$. Then:
 $$ \mathbb{E} \left[  \mathbb{D}_k\left(P_{\hat{\theta}_n},P_{\theta_0} \right) \right]  \leq 4\varepsilon  + 2 \sqrt{ \frac{1+2\sum_{t=1}^{n}\varrho_t}{n}} . $$
If moreover we assume that Assumptions~\ref{asm:our:mixing} and~\ref{asm:gamma:mixing} are satisfied, then for any $\delta\in(0,1)$,
 $$ \mathbb{P} \left[ \mathbb{D}_k\left( P_{\hat{\theta}_n},P_{\theta_0}\right) \leq  2 \left( 2\varepsilon 
 +  \frac{\sqrt{1+2\Sigma} + (1+\Gamma)\sqrt{2\log\left(\frac{1}{\delta}\right)} }{\sqrt{n}} \right) \right] \geq 1-\delta.$$
\end{cor}

We obtain a rate $\max(1/\sqrt{n},\varepsilon)$ in MMD distance (note once again that the convergence rate with respect to more standard distances is studied in Section~\ref{sec:examples}). When $\varepsilon \lesssim 1/\sqrt{n}$, then we recover the rate of convergence without contamination, and when $1/\sqrt{n} \lesssim \varepsilon$, then the rate is dominated by the contamination ratio $\varepsilon$. Hence, the maximum number of outliers which can be tolerated without breaking down the rate is $n\varepsilon \asymp \sqrt{n}$.

This result can also be extended to the adversarial contamination setting, where no assumption is made on the outliers.

\begin{prop}
 \label{prop:adversarial}
 Assume that $X_1,\dots,X_n$ are identically distributed from from $P^0 = P_{\theta_0} $ for some $\theta_0\in\Theta$. However, the statistician only observes $\tilde{X_1},\dots,\tilde{X_n}$ where $\tilde{X}_i$ can be any arbitrary value for $i\in \mathcal{O}$, $\mathcal{O}$ is any arbitrary set subject to the constraint $|\mathcal{O}| \leq \varepsilon n$, and $\tilde{X}_i=X_i$ for $i\notin \mathcal{O}$ and builds the estimator $\tilde{\theta}_n$ based on these observations:
 $$ \mathbb{D}_k\left(P_{\tilde{\theta}_n},\frac{1}{n}\sum_{i=1}^n \delta_{\tilde{X}_i}\right) = \inf_{\theta \in \Theta} \mathbb{D}_k\left(P_{\theta},\frac{1}{n}\sum_{i=1}^n \delta_{\tilde{X}_i}\right). $$
 Then:
 $$  \mathbb{D}_k\left(P_{\tilde{\theta}_n},P_{\theta_0} \right)  \leq 
 4\varepsilon  + 2 \mathbb{D}_k\left(P_{\hat{\theta}_n},P_{\theta_0} \right) .  $$
Thus
 $$
 \mathbb{E} \left[  \mathbb{D}_k\left(P_{\tilde{\theta}_n},P_{\theta_0} \right) \right]  \leq 4\varepsilon  + 4 \sqrt{ \frac{1+2\sum_{t=1}^{n}\varrho_t}{n}}
 $$
 and, under Assumptions~\ref{asm:our:mixing} and~\ref{asm:gamma:mixing}, for any $\delta\in(0,1)$,
 $$ \mathbb{P} \left[ \mathbb{D}_k\left( P_{\hat{\theta}_n},P_{\theta_0}\right) \leq  4 \left( \varepsilon 
 +  \frac{\sqrt{1+2\Sigma} + (1+\Gamma)\sqrt{2\log\left(\frac{1}{\delta}\right)} }{\sqrt{n}} \right) \right] \geq 1-\delta.$$
\end{prop}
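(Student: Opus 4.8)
The plan is to work entirely inside the RKHS and to exploit the fact that corrupting a fraction $\varepsilon$ of the sample perturbs the empirical mean embedding by at most $2\varepsilon$. Write $\hat{P}_n = \frac1n\sum_i \delta_{X_i}$ for the (unobserved) clean empirical measure and $\tilde{P}_n = \frac1n\sum_i \delta_{\tilde{X}_i}$ for the contaminated one. The crux is the perturbation estimate
$$ \mathbb{D}_k(\hat{P}_n,\tilde{P}_n) = \Big\| \frac1n \sum_{i\in\mathcal{O}}\big(k(X_i,\cdot) - k(\tilde{X}_i,\cdot)\big) \Big\|_{\mathcal{H}_k} \leq \frac1n \sum_{i\in\mathcal{O}}\big(\|k(X_i,\cdot)\|_{\mathcal{H}_k} + \|k(\tilde{X}_i,\cdot)\|_{\mathcal{H}_k}\big) \leq 2\varepsilon, $$
where I use that only the indices in $\mathcal{O}$ differ, the triangle inequality in $\mathcal{H}_k$, the identity $\|k(x,\cdot)\|_{\mathcal{H}_k} = \sqrt{k(x,x)} \leq 1$ (the kernel is bounded by $1$), and $|\mathcal{O}| \leq \varepsilon n$.

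Second, I would derive the deterministic comparison by chaining triangle inequalities with the defining optimality of $\tilde{\theta}_n$. Since $\tilde{\theta}_n$ minimizes $\theta \mapsto \mathbb{D}_k(P_\theta,\tilde{P}_n)$ and $\theta_0\in\Theta$, we have $\mathbb{D}_k(P_{\tilde{\theta}_n},\tilde{P}_n) \leq \mathbb{D}_k(P_{\theta_0},\tilde{P}_n)$, whence
$$ \mathbb{D}_k(P_{\tilde{\theta}_n},P_{\theta_0}) \leq \mathbb{D}_k(P_{\tilde{\theta}_n},\tilde{P}_n) + \mathbb{D}_k(\tilde{P}_n,P_{\theta_0}) \leq 2\,\mathbb{D}_k(\tilde{P}_n,P_{\theta_0}). $$
Inserting $\hat{P}_n$ and invoking the perturbation bound, $\mathbb{D}_k(\tilde{P}_n,P_{\theta_0}) \leq \mathbb{D}_k(\tilde{P}_n,\hat{P}_n) + \mathbb{D}_k(\hat{P}_n,P_{\theta_0}) \leq 2\varepsilon + \mathbb{D}_k(\hat{P}_n,P_{\theta_0})$, which gives $\mathbb{D}_k(P_{\tilde{\theta}_n},P_{\theta_0}) \leq 4\varepsilon + 2\,\mathbb{D}_k(\hat{P}_n,P_{\theta_0})$. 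To reproduce the displayed statement one further bounds $\mathbb{D}_k(\hat{P}_n,P_{\theta_0})$ by routing through the clean estimator $\hat{\theta}_n$: optimality of $\hat{\theta}_n$ yields $\mathbb{D}_k(P_{\hat{\theta}_n},\hat{P}_n) \leq \mathbb{D}_k(\hat{P}_n,P_{\theta_0})$, so that the clean-data results can be applied verbatim to the right-hand side.

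Third and finally, I would take expectations and apply Theorem~\ref{theorem:mmd:1} in the well-specified case $P^0 = P_{\theta_0}$, where $\inf_{\theta\in\Theta}\mathbb{D}_k(P_\theta,P^0) = 0$: this controls $\mathbb{E}[\mathbb{D}_k(\hat{P}_n,P_{\theta_0})]$ (equivalently $\mathbb{E}[\mathbb{D}_k(P_{\hat{\theta}_n},P_{\theta_0})]$) by the $n^{-1/2}$ term, yielding the stated expectation bound. For the high-probability statement I would feed the concentration step underlying Theorem~\ref{theorem:mmd:briol:improved} (Rio's version of McDiarmid's inequality, valid under Assumptions~\ref{asm:our:mixing} and~\ref{asm:gamma:mixing}) into the same deterministic inequality, which produces the claimed bound with probability at least $1-\delta$.

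The genuinely substantive step is the perturbation estimate $\mathbb{D}_k(\hat{P}_n,\tilde{P}_n)\leq 2\varepsilon$: it is what makes the RKHS geometry pay off, and it is the reason adversarial contamination can be handled with \emph{no} assumption on the outliers, since the bound on $\|k(X_i,\cdot)-k(\tilde{X}_i,\cdot)\|_{\mathcal{H}_k}$ holds uniformly over all possible corrupted values. Everything after that is triangle-inequality bookkeeping combined with the optimality of the two estimators; the only mild subtlety is to keep track of whether one reduces to $\mathbb{D}_k(\hat{P}_n,P_{\theta_0})$ or to $\mathbb{D}_k(P_{\hat{\theta}_n},P_{\theta_0})$, the two differing only by one application of the optimality of $\hat{\theta}_n$ and affecting the final constant by at most a factor of two.
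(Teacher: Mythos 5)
Your proposal is correct and follows essentially the same route as the paper: the key step is the perturbation estimate $\mathbb{D}_k(\hat{P}_n,\tilde{P}_n)\leq 2\varepsilon$ from the boundedness of the kernel and $|\mathcal{O}|\leq \varepsilon n$, combined with triangle inequalities and the optimality of $\tilde{\theta}_n$ and $\hat{\theta}_n$. The only cosmetic difference is that you compare $\tilde{\theta}_n$ directly against $\theta_0$ whereas the paper first compares it against $\hat{\theta}_n$ and applies the perturbation bound twice; both chains land on the same inequality $\mathbb{D}_k(P_{\tilde{\theta}_n},P_{\theta_0})\leq 4\varepsilon + 2\,\mathbb{D}_k(\hat{P}_n,P_{\theta_0})$, from which the expectation and high-probability bounds follow exactly as you describe.
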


One can see that the rate of convergence we obtain without making any assumption on the outliers is exactly the same than in H\"uber's contamination model. The only difference is that the constant in the right hand side of the inequality is tighter in H\"uber's contamination model.

 \section{Examples}
\label{sec:examples}
 
\subsection{Independent observations}

In the previous section we studied the convergence of $P_{\hat{\theta}_n}$ with the MMD distance. In this subsection, we show what are the consequences of these results in terms of the convergence of $\hat{\theta}$ in some classical models. For the sake of simplicity, we focus on i.i.d observations. That is, $\varrho_t = 0$ for any $t\geq 1$. Moreover, we will only use the Gaussian kernel $k_\gamma(x,y) = \exp(-\|x-y\|^2/\gamma^2)$.

\subsubsection{Estimation of the mean in a Gaussian model}

Here, $\mathbb{X}=\mathbb{R}^d$ and we are interested in the estimation of the mean in a Gaussian model. For the sake of simplicity, we assume that the variance is known.

\begin{prop} \label{prop:ex:gauss}
 Assume that $P_\theta = \mathcal{N}(\theta,\sigma^2 I_d)$ for $\theta\in\Theta=\mathbb{R}^d$. Moreover, assume that we are in an adversarial contamination model where a proportion at most $\varepsilon$ of the observations is contaminated. Then, with probability $1-\delta$,
\begin{equation} \label{ex:gauss:3}
\|\theta-\tilde{\theta}_n\|^2
\leq - (4\sigma^2 + \gamma^2)
 \log\left\{ 1-8 {\rm e}^{\frac{2\sigma^2 d}{\gamma^2}} \left( \varepsilon 
 +  \frac{1 + \sqrt{2\log\left(\frac{1}{\delta}\right)} }{\sqrt{n}} \right)^2 \right\}.
\end{equation}
In particular, the choice $\gamma = \sigma \sqrt{2d}$ leads to
$$
\|\tilde{\theta}_n - \theta_0 \|^2 \leq -2\sigma^2(d+2) \log\left[1-8 {\rm e} \left( \varepsilon 
 +  \frac{1 + \sqrt{2\log\left(\frac{1}{\delta}\right)} }{\sqrt{n}} \right)^2\right].
$$

\end{prop}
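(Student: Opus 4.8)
The plan is to reduce the statement to a closed-form computation of the MMD between two Gaussians and then invert it, feeding in the adversarial bound of Proposition~\ref{prop:adversarial}. Since we work in the i.i.d.\ setting we have $\varrho_t=0$ for all $t$, so Assumptions~\ref{asm:our:mixing} and~\ref{asm:gamma:mixing} hold with $\Sigma=\Gamma=0$, and the probability part of Proposition~\ref{prop:adversarial} gives, with probability at least $1-\delta$,
\[
\mathbb{D}_{k_\gamma}\left(P_{\tilde{\theta}_n},P_{\theta_0}\right) \leq 4\left(\varepsilon+\frac{1+\sqrt{2\log(1/\delta)}}{\sqrt{n}}\right).
\]
Everything then hinges on relating the left-hand side, which lives in the RKHS, to the Euclidean distance $\|\tilde{\theta}_n-\theta_0\|$.

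First I would compute $\mathbb{D}_{k_\gamma}(P_\theta,P_{\theta_0})$ explicitly. The three terms in the quadratic form defining $\mathbb{D}_{k_\gamma}^2$ all reduce to an integral $\mathbb{E}[\exp(-\|Z\|^2/\gamma^2)]$ for a Gaussian $Z$: if $X\sim\mathcal{N}(\mu_1,\sigma^2 I_d)$ and $Y\sim\mathcal{N}(\mu_2,\sigma^2 I_d)$ are independent then $X-Y\sim\mathcal{N}(\mu_1-\mu_2,2\sigma^2 I_d)$. Carrying out this factorized, coordinatewise Gaussian integral by completing the square gives
\[
\mathbb{E}\big[k_\gamma(X,Y)\big]=\left(\frac{\gamma^2}{4\sigma^2+\gamma^2}\right)^{d/2}\exp\left(-\frac{\|\mu_1-\mu_2\|^2}{4\sigma^2+\gamma^2}\right),
\]
and substituting $(\mu_1,\mu_2)\in\{(\theta,\theta),(\theta_0,\theta_0),(\theta,\theta_0)\}$ into the defining formula yields
\[
\mathbb{D}_{k_\gamma}^2(P_\theta,P_{\theta_0})=2\left(\frac{\gamma^2}{4\sigma^2+\gamma^2}\right)^{d/2}\left[1-\exp\left(-\frac{\|\theta-\theta_0\|^2}{4\sigma^2+\gamma^2}\right)\right].
\]

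Then I would combine the two displays. Squaring the Proposition~\ref{prop:adversarial} bound and substituting the closed form gives, with probability $1-\delta$,
\[
1-\exp\left(-\frac{\|\tilde{\theta}_n-\theta_0\|^2}{4\sigma^2+\gamma^2}\right)\leq 8\left(\frac{4\sigma^2+\gamma^2}{\gamma^2}\right)^{d/2}\left(\varepsilon+\frac{1+\sqrt{2\log(1/\delta)}}{\sqrt{n}}\right)^2 .
\]
Bounding the prefactor via $\left(1+4\sigma^2/\gamma^2\right)^{d/2}\leq\exp\left(2\sigma^2 d/\gamma^2\right)$, solving for $\exp(-\|\tilde{\theta}_n-\theta_0\|^2/(4\sigma^2+\gamma^2))$ and taking logarithms gives~\eqref{ex:gauss:3}. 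The stated special case is the substitution $\gamma=\sigma\sqrt{2d}$, for which $4\sigma^2+\gamma^2=2\sigma^2(d+2)$ and $2\sigma^2 d/\gamma^2=1$.

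The only genuine content is the Gaussian-integral computation; the rest is bookkeeping. The point that requires care is the final inversion: passing from $\exp(-\|\cdot\|^2/c)\geq A$ to $\|\cdot\|^2\leq -c\log A$ is only legitimate when $A=1-8\exp(2\sigma^2 d/\gamma^2)(\varepsilon+\cdots)^2$ is strictly positive, and this also guarantees the correct inequality direction since then $0<A<1$ forces $\log A<0$. Thus the bound is informative precisely in the regime where the squared-MMD bound falls below $2(\gamma^2/(4\sigma^2+\gamma^2))^{d/2}$, and is vacuous otherwise.
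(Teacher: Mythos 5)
Your proposal is correct and follows essentially the same route as the paper: derive the closed form $\mathbb{D}_{k_\gamma}^2(P_\theta,P_{\theta_0})=2(\gamma^2/(4\sigma^2+\gamma^2))^{d/2}[1-\exp(-\|\theta-\theta_0\|^2/(4\sigma^2+\gamma^2))]$, plug in the adversarial bound from Proposition~\ref{prop:adversarial}, control the prefactor via $(1+4\sigma^2/\gamma^2)^{d/2}\leq {\rm e}^{2\sigma^2 d/\gamma^2}$, invert, and set $\gamma=\sigma\sqrt{2d}$. The only cosmetic difference is that you evaluate the Gaussian expectation by completing the square coordinatewise while the paper uses the moment generating function of a noncentral chi-square; your closing remark on the bound being informative only when the argument of the logarithm is positive is a valid observation that the paper leaves implicit.
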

The complete proof can be found in the supplementary material. Note that when $\varepsilon$ is small and $n$ is large,
\begin{multline*}
\|\tilde{\theta}_n-\theta_0\|^2 \leq -2\sigma^2(d+2) \log\left[1-16 {\rm e} \left( \varepsilon^2 
 +  \frac{\left(1 + \sqrt{2\log\left(\frac{1}{\delta}\right)}\right)^2 }{n} \right)\right]
\\
\sim
32 {\rm e}\sigma^2(d+2) \left( \varepsilon^2 
 +  \frac{\left(1 + \sqrt{2\log\left(\frac{1}{\delta}\right)}\right)^2 }{n} \right).
\end{multline*}

We can see that our MMD estimator achieves a rate of convergence $d\varepsilon^2 + d/n$ which is the same than for several median-based estimators such as the geometric median or the coordinatewise median (see Proposition 2.1 in \cite{ChenGao2018}). We have a quadratic dependence in $\varepsilon$, contrary to many robust estimators such as Median-of-Means which dependence in $\varepsilon$ is linear. Hence, as soon as the dimension is no larger than the square root of the sample size $d\leq\sqrt{n}$, the MMD method tolerates a larger number of outliers without affecting the usual rate of convergence (i.e. the rate with no contamination).

Unfortunately, it seems that our method performs poorly compared to such estimators in large dimension. Indeed, according to Theorems 2.1 and 2.2 in \cite{ChenGao2018}, the minimax optimal rate with respect $d$, $\varepsilon$ and $n$ is $\varepsilon^2 + d/n$. Furthermore, numerical experiments and the investigation conducted for the population limit case when one has access to infinitely many samples in \cite{MMDGANRobust} (that has been published since the first version of this paper) suggest that the MMD estimator can not match the minimax rate of convergence. Nevertheless, this non-optimality in the minimax sense does not necessarily imply inaccurate mean estimation in general, and MMD can still lead to efficient estimation in most contamination scenarios.

To understand why the MMD estimator can not match the minimax rate of convergence in high dimension, and why this is not necessarily a problem, we need to analyze the landscape of the optimization program. 

Let us first investigate the population limit case, where we do not work with the MMD distance to the empirical distribution $\hat{P}_n$ but to the true distribution $(1-\varepsilon)\mathcal{N}(\theta_0,\sigma^2 I_d)+\varepsilon Q$, as if we had access to infinitely many samples, and with a point-mass delta Dirac contamination $Q=\delta_{\{\theta_c\}}$. The optimization program is, for any value of $\gamma$, 
\begin{multline*}
\min_{\theta\in\mathbb{R}^d} \mathbb{D}_{k_\gamma}\left(P_{\theta},(1-\varepsilon)\mathcal{N}(\theta_0,\sigma^2 I_d)+\varepsilon \delta_{\{\theta_c\}}\right)
\\
=
\max_{\theta\in\mathbb{R}^d} \bigg\{
(1-\varepsilon) \exp\left(-\frac{\|\theta -\theta_0\|^2}{\gamma^2+4\sigma^2}\right) + \varepsilon \left(\frac{\gamma^2+4\sigma^2}{\gamma^2+2\sigma^2}\right)^{d/2} \exp\left(-\frac{\|\theta-\theta_c\|^2}{\gamma^2+2\sigma^2}\right) \bigg\} .
\end{multline*}

Even though the objective function is nonconvex in $\theta$, it is easy to see that the solution belongs to the line between $\theta_0$ and $\theta_c$. More precisely, if $\theta_0$ and $\theta_c$ are far from each other, then the solution is simply $\theta_0$. At the opposite, if $\theta_0$ and $\theta_c$ are closed, then the solution will be very close to $\theta_0$. In the situation in between where $\|\theta_0-\theta_C\|^2\approx d$, then it is proven in \cite{MMDGANRobust} that the solution is at least $\varepsilon\sqrt{d}$ far from the true parameter $\theta_0$, which explains the term $d\varepsilon^2$ in the rate of convergence of the MMD estimator. Hence, we understand that the worst-case rate of the MMD estimator does not correspond to cases where $\theta_c$ is far from $\theta_0$ but to cases where the distance is quite large in high dimensions only (of order $\sqrt{d}$).

The previous reasoning can be easily generalized to the MMD estimator with a finite sample. In this situation with $Q=\delta_{\{\theta_c\}}$, the optimization program can be written, denoting by $\mathcal{O}$ the set of outliers, 
$$
\max_{\theta\in\mathbb{R}^d} \bigg\{
\sum_{i\notin\mathcal{O}} \exp\left(-\frac{\|\theta -X_i\|^2}{\gamma^2+2\sigma^2}\right) + |\mathcal{O}| \exp\left(-\frac{\|\theta-\theta_c\|^2}{\gamma^2+2\sigma^2}\right) \bigg\} , 
$$
and the solution belongs to the convex hull of the set of points composed of the (random) inliers in the random variables $X_1,...,X_n$ and of the contamination point $\theta_c$. A remarkable point in high dimensional probability is that samples from a multivariate standard Gaussian distribution are concentrated on the sphere of radius $\sqrt{d}$ centered at $\theta_0$, which means that the typical distance $\|X_i-\theta_0\|$ of a datapoint $X_i$ from the mean $\theta_0$ is roughly $\sqrt{d}$. Then, if the contamination is such that $\|\theta_0-\theta_c\|
^2\approx d$, the outliers lie at a distance $\sqrt{d}$ from $\theta_0$ without being detected, and thus look harmless but shift the mean by approximately $\sqrt{d}\varepsilon$, see Figure \ref{shiftmean}.
\begin{figure}[h]
\caption{Illustration of the behaviour of the MMD estimator in the high-dimensional Gaussian mean estimation problem. The true parameter $\theta_0$ and datapoints sampled from the true distribution $\mathcal{N}(\theta_0,I_d)$ are colored in blue. Outliers  and the MMD estimator $\hat{\theta}_n$ are colored in red. We can see that outliers lying at a distance $\sqrt{d}$ are not detected and shift the mean by $\varepsilon\sqrt{d}$.
}
\label{shiftmean}
\includegraphics[width=7cm]{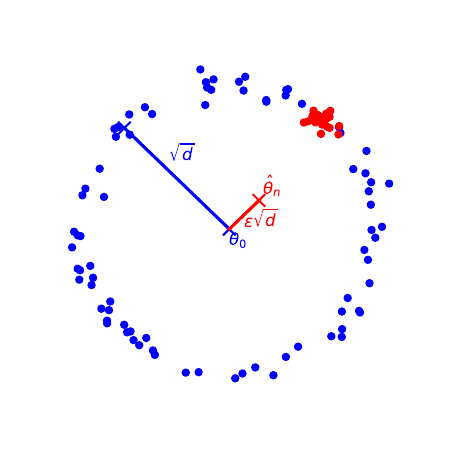}
\centering
\end{figure}

Hence, perhaps counter-intuitively  at first sight, the worst contamination does not come from a value of $\theta_c$ that is very far away from $\theta_0$ (in which case the estimation will simply be the mean of the inliers), but that is only $\sqrt{d}$ away from $\theta_0$, and hence there is mainly one "worst-case contamination" that explains the non-optimality in the minimax sense. Figure 1a of \cite{MMDGANRobust} even seems to show that the error of the MMD estimator when $\gamma$ is of order $\sqrt{d}$ increases with $\|\theta_0-\theta_c\|$ until achieving $\sqrt{d}$, and then decreases. The same applies to a Gaussian contamination with a small variance.

\subsubsection{Cauchy model}

Here, $\mathbb{X}=\mathbb{R}$ and $P_\theta=\mathcal{C}(\theta,1)$ where $\mathcal{C}(\theta,s)$ has density $1/[\pi s (1+(x-\theta)^2 / s^2)]$.

\begin{prop} \label{prop:ex:cauchy}
 Assume that $P_\theta = \mathcal{C}(\theta,1)$ for $\theta\in\Theta=\mathbb{R}$. Moreover, assume that we are in an adversarial contamination model where a proportion at most $\varepsilon$ of the observations is contaminated. Then, taking $\gamma=2$ leads to, for any $\delta>0$,
$$
(\tilde{\theta}_n - \theta_0)^2 \leq
4\left( 1 - \frac{1}{1-96 \pi \left( \varepsilon^2 + \frac{2 + 4\log(1/\delta) }{n} \right) } \right).
$$
\end{prop}
Note that
$$ (\tilde{\theta}_n - \theta_0)^2 \leq
4\left( 1 - \frac{1}{1-128 \pi \left( \varepsilon^2 + \frac{2 + 4\log(1/\delta) }{n} \right) } \right) \sim 512 \pi \left( \varepsilon^2 + \frac{2 + 4\log(1/\delta) }{n} \right) .$$

\subsubsection{Estimation with a dictionary}
\label{dictionary}

We consider here estimation of $P^0$ by a linear combination of measures in a dictionary. This framework actually appears in various models:
\begin{itemize}
 \item first, when the dictionary contains probability distributions, this is simply a mixture of known components. In this case, the linear combination is actually a convex combination. This context is for example studied in~\cite{Dal2017}.
 \item assuming that $P^0$ has a density, in nonparametric density estimation, we can use this setting, the dictionary being defined by a basis of $L_2$. This is for example the point of view in~\cite{Alquier2008,BTW1,BTW2}.
\end{itemize}
We will here focus on the first setting, but an extension to the second one is quite straightforward. Let $\{\Phi_1,\dots,\Phi_D\}$ be a family of probability measures over $\mathbb{X}=\mathbb{R}^d$. For $1\leq i\leq D$ we remind that
$$ \mu_{\Phi_i}(\cdot) = \int  k(x,\cdot) \Phi_i({\rm d}x). $$
Define the measure $P_\theta=\mathcal{D}(\theta;\Phi_1,\dots,\Phi_D)=\sum_{i=1}^D \theta_i \Phi_i $, and define the model $\{P_\theta,\theta\in\Theta\}$ with $\Theta \subseteq \mathcal{S}_D=\{\theta\in\mathbb{R}_+^D: \sum_{i=1}^D \theta_i = 1 \}$. The estimator is then
$$
\hat{\theta}_n = \underset{\theta \in \Theta}{\arg\min} \left\| \sum_{\ell=1}^D \theta_\ell \mu_{\Phi_\ell}(\cdot) -\mu_{\hat{P}_n} \right\|^2_{\mathcal{H}_k}.
$$
An application of Theorem~\ref{theorem:mmd:briol:improved} leads to:

\begin{prop}
 Assume that $P_{\theta} = \sum_{i=1}^D \theta_i \Phi_i$ where $\Phi_i$ is a probability distribution. Define the matrix $G_\gamma = (\left<\mu_{\Phi_i},\mu_{\Phi_j}\right>_{\mathcal{H}_{k_\gamma}} )_{1\leq i,j\leq D} $. Letting $\lambda_{\min}(\cdot) $ denote the smallest eigenvalue of a symmetric matrix, we have:
 $$ \mathbb{P} \left[ \mathbb{D}_k\left( P_{\hat{\theta}_n},P^0 \right) \leq  \inf_{\theta\in\Theta} \mathbb{D}_k\left( P_{\theta},P^0 \right)
 + 2 \frac{1 + \sqrt{2\log\left(\frac{1}{\delta}\right)} }{\sqrt{n}} \right] \geq 1-\delta$$
 and, in the well specified case where $P^0 = P_{\theta_0}$,
 $$ \mathbb{P} \left[ \|\hat{\theta}-\theta_0\|^2 \leq  2 \frac{1 + \sqrt{2\log\left(\frac{1}{\delta}\right)} }{\lambda_{\min}(G_\gamma) \sqrt{n}} \right] \geq 1-\delta
 .$$
\end{prop}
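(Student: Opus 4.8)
The plan is to establish the two displayed inequalities in turn, the first being a direct specialization of the general machinery and the second following by a purely linear-algebraic translation of the MMD guarantee into the parameter space $\mathbb{R}^D$.

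For the first inequality I would simply invoke Theorem~\ref{theorem:mmd:briol:improved}. Since the observations are i.i.d., as noted at the start of Section~\ref{sec:examples} we have $\varrho_t = 0$ for every $t \ge 1$ and may take all the $\gamma_\ell = 0$, so that $\Sigma = \Gamma = 0$; substituting these values into the conclusion of Theorem~\ref{theorem:mmd:briol:improved} collapses the deviation term to $2(1 + \sqrt{2\log(1/\delta)})/\sqrt{n}$, which is exactly the announced bound. No property of the dictionary enters at this stage.

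The second inequality is where the structure of the model is used. The decisive observation is that, because $P_\theta = \sum_{i=1}^D \theta_i \Phi_i$, the mean embedding is \emph{linear} in $\theta$, namely $\mu_{P_\theta} = \sum_{i=1}^D \theta_i \mu_{\Phi_i}$. Hence, writing $v = \hat{\theta} - \theta_0$,
$$ \mathbb{D}_k^2\!\left(P_{\hat{\theta}_n},P_{\theta_0}\right) = \left\| \sum_{i=1}^D v_i \mu_{\Phi_i} \right\|_{\mathcal{H}_k}^2 = \sum_{i,j=1}^D v_i v_j \left\langle \mu_{\Phi_i},\mu_{\Phi_j}\right\rangle_{\mathcal{H}_k} = v^\top G_\gamma v, $$
so the squared MMD distance between two members of the model is exactly the quadratic form attached to the Gram matrix $G_\gamma$. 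Since $G_\gamma$ is symmetric and positive semidefinite, the Rayleigh-quotient inequality gives $v^\top G_\gamma v \ge \lambda_{\min}(G_\gamma)\,\|v\|^2$, and therefore $\|\hat{\theta} - \theta_0\|^2 \le \mathbb{D}_k^2(P_{\hat{\theta}_n},P_{\theta_0})/\lambda_{\min}(G_\gamma)$. In the well-specified case $P^0 = P_{\theta_0}$ the infimum $\inf_{\theta\in\Theta}\mathbb{D}_k(P_\theta,P^0)$ vanishes, being attained at $\theta_0\in\Theta$, so the first inequality controls $\mathbb{D}_k(P_{\hat{\theta}_n},P_{\theta_0})$ with probability $1-\delta$; inserting that control into the previous display yields the stated bound on $\|\hat{\theta}-\theta_0\|^2$.

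The one point genuinely requiring attention is the non-degeneracy of $G_\gamma$: the bound is informative only when $\lambda_{\min}(G_\gamma) > 0$, that is when $\mu_{\Phi_1},\dots,\mu_{\Phi_D}$ are linearly independent in $\mathcal{H}_k$. Because $k$ is characteristic, a relation $\sum_i c_i \mu_{\Phi_i} = 0$ forces the signed measure $\sum_i c_i \Phi_i$ to be zero, so this reduces to the natural dictionary-level assumption that the $\Phi_i$ be linearly independent as measures, which I would carry as a standing hypothesis. The remaining discrepancy is only cosmetic: substituting $\mathbb{D}_k(P_{\hat{\theta}_n},P_{\theta_0}) \le 2(1+\sqrt{2\log(1/\delta)})/\sqrt{n}$ and squaring produces a factor $(1+\sqrt{2\log(1/\delta)})^2/n$, i.e. a rate of order $1/(\lambda_{\min}(G_\gamma)\,n)$ that is in fact sharper than the announced $1/\sqrt{n}$ form for large $n$, so matching the precise right-hand side is a matter of bookkeeping constants rather than a real obstacle.
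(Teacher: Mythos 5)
Your proposal is correct and follows exactly the route the paper intends (the paper gives no written proof, only the remark that the result is ``an application of Theorem~\ref{theorem:mmd:briol:improved}''): the first display is Theorem~\ref{theorem:mmd:briol:improved} with $\Sigma=\Gamma=0$, and the second follows from the linearity of the mean embedding, which gives $\mathbb{D}_k^2(P_{\hat{\theta}_n},P_{\theta_0})=(\hat{\theta}-\theta_0)^\top G_\gamma(\hat{\theta}-\theta_0)\geq \lambda_{\min}(G_\gamma)\|\hat{\theta}-\theta_0\|^2$. You are also right that the natural output of this argument is the squared bound $4(1+\sqrt{2\log(1/\delta)})^2/(\lambda_{\min}(G_\gamma)\,n)$, which yields the displayed $1/\sqrt{n}$ form once $2(1+\sqrt{2\log(1/\delta)})/\sqrt{n}\leq 1$ and is sharper beyond that, so the residual discrepancy is indeed only one of constants.
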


\subsection{$\beta$-mixing observations}

We now consider non-independent random variables: as in the general framework presented above, $(X_t)_{t\in \mathbb{Z}}$ is a strictly stationary time series, with stationary distribution $P^0$, and we observe $X_1,\dots,X_n$. We will exhibit some condition on the dependence of the $X_i$'s ensuring that we can still estimate $P^0$ with the MMD method.

There is a very rich literature on limit theorems and exponential inequalities under conditions on various dependence coefficients. Mixing coefficients and their applications are detailed in the monographs~\cite{mixing,mixing2}, weak dependence coefficients in~\cite{weakd}. In this subsection, we show that our coefficient $\varrho_t$ can be upper-bounded by the $\beta$-mixing coefficients. So for any $\beta$-mixing process, the estimation of $P^0$ using MMD remains possible. We also remind some examples of $\beta$-mixing processes.  Note that we will show in the next subsection that Theorem~\ref{theorem:mmd:1} can be successfully applied to non $\beta$-mixing processes.

\subsubsection{$\beta$-mixing and coefficients $\varrho_t$}

We start by a reminder of the definition of the $\beta$-mixing coefficients, from page 4 (Chapter 1) in~\cite{weakd}.
\begin{dfn}
\label{dfn:beta}
Given two $\sigma$-algebras $\mathcal{A}$ and $\mathcal{B}$,
 $$ \beta(\mathcal{A},\mathcal{B}) = \frac{1}{2} \sup_{
 \begin{tiny}
 \begin{array}{c}
 I,J \geq 1
 \\ U_1,\dots,U_I
 \\ V_1,\dots,V_J
 \end{array}
 \end{tiny}
 } \sum_{1\leq i \leq I} \sum_{1\leq j \leq J} |\mathbb{P}(U_i\cap V_j) - \mathbb{P}(U_i)\mathbb{P}(V_j)|  $$
 where $(U_1,\dots,U_I)$ is any partition of $\mathcal{A}$ and $V_1,\dots,V_j$ any partition of $\mathcal{B}$. Put:
 $$\beta_t^{(X)} = \beta(\sigma(X_0,X_{-1},\dots),\sigma(X_t,X_{t+1},\dots)). $$
\end{dfn}
Section 1.5 in~\cite{mixing} provides summability conditions on the $\beta_t^{(X)}$ leading to a law of large numbers and to a central limit theorem. Examples are also discussed.
\begin{exm}
Assume in this example that $(X_t)$ is an homogeneous Markov chain given by its transition kernel $P$ and $X_0\sim \pi$ where $\pi P = \pi$. Assume that there is a $0<c\leq 1$ and a probability measure $Q$ on $\mathbb{R}^d$ such that, for some integer $r\geq 1$ and for any measurable $A$, $P^r(x,A) \geq c Q(A)$. Then it is known, see e.g. Theorem 1 page 88 in~\cite{mixing} that
$$ \beta_t^{(X)} \leq 2(1-c)^{\frac{t}{r}-1}. $$
\end{exm}

We now compare our $\varrho$ coefficients with the $\beta$-mixing coefficients.
\begin{prop}
\label{prop:beta}
Assume that $k(x,y)=F(\|x-y\|)$ were $F(a) = \int_{a}^{\infty} f(b) {\rm d}b $ for some nonnegative continuous function $f$ with $\int_{0}^\infty f(b) {\rm d}b = 1$. Then we have
 $$ \textcolor{red}{\varrho_t \leq  \beta(\sigma(X_0),\sigma(X_t)) \leq  \beta_t.} $$
\end{prop}

Note that $k(x,y) = \exp(-\|x-y\|/\gamma)$ and $k(x,y) = \exp(-\|x-y\|^2/\gamma^2)$ for example trivially work, respectively with $f(b)=\exp(-b/\gamma)/\gamma$ and $f(b)=2 b \exp(-b^2/\gamma^2)/\gamma^2$.

\subsubsection{Application: hidden Markov chains}

Assume here that $(Y_t)_{t\in\mathbb{N}}$ is a Markov chain on $\{1,\dots,d\}$, and that $X_t|(Y_t=i) $ is independent from all the other values $Y_{t'}$ and is drawn in $\mathbb{R}^D$ from a probability measure $\Phi_i$. The $\Phi_i$'s are known and $X_1,\dots,X_n$ are observed but the $(Y_t)_{t\in\mathbb{N}}$ are not observed. Note that this is a dependent extension of the mixture model $\mathcal{D}(\theta;\Phi_1,\dots,\Phi_d)$ discussed above. Indeed, we consider this as a case of misspecification: the statistician uses the mixture model $\mathcal{D}(\theta;\Phi_1,\dots,\Phi_d)$ with $\Theta=\mathcal{S}_d$, being not aware that the data is actually not independent.

Letting $P$ denote the transition matrix of $Y$, we assume that there exists $c>0$ and an integer $r\geq 0$ such that $P^{r}(i,j)\geq c/d$ for any $(i,j)\in\{1,\dots,d\}^2$. Then we have $\beta_t^{(Y)} \leq 2(1-c)^{t/r-1} $. This also implies that there is a unique $\pi$ such that $\pi P = \pi$ and we assume that $Y_0\sim \pi$. Then the distribution $P^0$ of each $X_t$ is given by $P^0(x) = \sum_{i=1}^d \pi_i \Phi_i(x) $.

Also, note that
$$ \varrho_t = \beta(\sigma(X_0),\sigma(X_t)) \leq \beta(\sigma(X_0,Y_0),\sigma(X_t,Y_t)) = \beta(\sigma(Y_0),\sigma(Y_t)) \leq 2(1-c)^{t/r-1} .$$
So, a direct application of Theorem~\ref{theorem:mmd:1} gives:
$$
 \mathbb{E}\left[ \|G_k^{-1/2}(\hat{\theta}-\pi) \|\right] =
\mathbb{E} \left[  \mathbb{D}_k\left(P_{\hat{\theta}_n},P^0 \right) \right]  \leq 2 \sqrt{ \frac{1 + (1-c)^{\frac{1}{r}-1}(3+c)}{n[1-(1-c)^{\frac{1}{r}}]}} .
$$

Note that we can add a second layer in the process: assume that an opponent is allowed to replace a fraction $\varepsilon$ of the $X_t$, as in Proposition~\ref{prop:adversarial}. This result in the observation of $\tilde{X}_t$ such that $\tilde{X}_t= X_t$ for a proportion $(1-\varepsilon)$ of the data, and $\tilde{X}_t$ can be anything for the remaining $\varepsilon$. For example, the opponent can try fo fool the learner, by drawing from the wrong $\Phi_i$. The MMD estimator $\tilde{\theta}$ still satisfies, from Proposition~\ref{prop:adversarial},
$$
\mathbb{E} \left[  \mathbb{D}_k\left(P_{\hat{\theta}_n},P^0 \right) \right]  \leq 4\varepsilon + 4 \sqrt{ \frac{1 + (1-c)^{\frac{1}{r}-1}(3+c)}{n[1-(1-c)^{\frac{1}{r}}]}} .
$$

\subsection{Auto-regressive observations}
\label{subsec:ar}

In this section, we provide examples of auto-regressive processes satisfying Assumption~\ref{asm:our:mixing} and Assumption~\ref{asm:gamma:mixing}, which allows to apply Theorem~\ref{theorem:mmd:briol:improved}. Interestingly, for one of these examples,
$\beta_t = 1/4$ and so $\sum_{t=1}^\infty \beta_t = \infty$, but still $\sum_{t=1}^\infty \varrho_t < \infty$: this means that Assumption~\ref{asm:our:mixing} is more general than $\beta$-mixing.

\subsubsection{Auto-regressive processes}

\begin{prop}
\label{prop:counterexample}
 Assume that $X_t$ takes values in $\mathbb{R}^d$ and that $k(x,y)=F(\|x-y\|)$ where $F$ is an $L$-Lipschitz function. Assume that 
 $$ X_{t+1} = A X_t + \varepsilon_{t+1}  $$
 where the $(\varepsilon_t)$ are i.i.d with $\mathbb{E}\|\varepsilon_0\|< \infty$, and $A$ is a matrix with $\|A\|= \sup_{\|x=1\|} \|A x\| < 1$. Then Assumption~\ref{asm:our:mixing} is satisfied with
 $$ \varrho_t \leq \|A\|^t \frac{2 L \mathbb{E}\|\varepsilon_0\|}{1-\|A\|} \text{ and } \Sigma = \sum_{t=1}^\infty \varrho_t = \frac{2 \|A\| L \mathbb{E}\|\varepsilon_0\|}{(1-\|A\|)^2}. $$
 Moreover, assume that almost surely, $\|\varepsilon_t\|\leq c$. Then Assumption~\ref{asm:gamma:mixing} is satisfied with
 $$ \gamma_i = \frac{2 c \sqrt{L} \|A\|^{\frac{i}{2}}}{1-\|A\|} \text{ and } \Gamma = \sum_{i=1}^{\infty} \gamma_i = \frac{2 c \sqrt{L \|A\|} }{(1-\|A\|)(1-\sqrt{\|A\|})} .$$
\end{prop}

\subsubsection{Examples of non-mixing processes with $\sum_t \varrho_t <\infty $}

First, we remind a classical example of non-mixing process, in the sense that $\sum_{t=1}^\infty \beta_t = \infty$. See for example Section 1.5 page 8 in~\cite{weakd} where it is also proven that it is neither $\alpha$-mixing. The process is real-valued, it is defined by $X_{t+1} = (X_t+ \eta_{t+1})/2$, where the $\eta_t$ are i.i.d $\mathcal{B}e(1/2)$ and $X_0\sim \mathcal{U}([0,1])$. Note that the noise is there $\varepsilon_t=\eta_t/2$. As for any $t$, $X_{t} = f(X_{t+1}) $ where $f$ is the measurable function $f(x) = 2x - \lfloor 2x\rfloor$, it is possible to take $I=J=2$, $V_1=U_1$ and $V_2 = U_2 = U_1^c$ for some $U_1$ with $\mathbb{P}(U_1)=1/2$ in  Definition~\ref{dfn:beta}. This leads to $\beta(\sigma(X_0),\sigma(X_t)) \geq 1/4 $.

However, according to Proposition~\ref{prop:counterexample}, as $\mathbb{E}|\varepsilon_0| = 1/4$, we have
$$ \varrho_t \leq \frac{L}{2^{t}} \text{, } \Sigma = \sum_{t=1}^\infty \varrho_t =  2 L <\infty \text{, and } \gamma_i = 2^{1-\frac{i}{2}}\sqrt{L} \text{, } \Gamma = \sum_{i=1}^\infty \gamma_i = \frac{2 \sqrt{L}}{\sqrt{2}-1} < \infty.$$

Another classical example of non-mixing process is a reversed version of the previous one. We draw $X_0\sim \mathcal{U}([0,1])$ and simply define $X_{t+1}= f(X_t)$ where we still have $f(x)=2x-\lfloor 2x \rfloor$. Note that once $X_0$ is given, the process $(X_t)_{t\geq 0}$ is entirely deterministic, and thus non-mixing. Properties of (generalized versions) of such processes are studied in Section 3.3 page 28 in~\cite{weakd}. It is not difficult to check that $Y_t = X_{-t}$ actually satisfies $Y_{t+1} = (Y_t + \varepsilon_{t})/2$ where the $\varepsilon_t$ are independent $\mathcal{B}e(1/2)$. Thus, a straightforward adaptation of the proof of Proposition~\ref{prop:counterexample} leads to $\varrho_t \leq 2/L^{t}$.

 \section{Stochastic gradient algorithm for MMD estimation}
\label{sec:algo}
 
In this section, we briefly discuss gradient-based algorithms to compute the estimator $\hat{\theta}_n$ when $\Theta\subset\mathbb{R}^d$. In Subsection~\ref{subsec:gradient} we provide an expression of the gradient of the criterion to be minimized. We briefly provide a special case where this gradient can be computed explicitly. However, in general, this is not the case, but we can provide unbiased estimators of this gradient as soon as we are able to sample from $P_\theta$, in this case the model is often referred to as a {\it generative model}. Thus it is possible to use a stochastic gradient algorithm when $\{P_\theta,\theta\in\Theta\}$ is a generative model. We describe this algorithm in Subsection~\ref{subsec:SGA}, and remind its theoretical properties in Subsection~\ref{subsec:thm:algo}.

Note that the idea to use a stochastic gradient algorithm to compute $\hat{\theta}_n$ was first used to train a generative neural network by~\cite{Roy2015}. In~\cite{Briol2019} the authors propose to use a stochastic natural gradient algorithm instead. By providing adaptation to the geometry of the problem, the natural gradient will lead to better results but increase the computational burden when the dimension of the problem is large.

\subsection{Gradient of the MMD distance}
\label{subsec:gradient}

We remind that in this whole section, $\Theta\subset\mathbb{R}^d$. To compute $\hat{\theta}_n$, one must minimize, with respect to $\theta\in\Theta$,
$$ 
\mathbb{D}_k^2(P_\theta,\hat{P}_n)
= \mathbb{E}_{X,X'\sim P_\theta} [k(X,X')]
- \frac{2}{n}\sum_{i=1}^n \mathbb{E}_{X\sim P_\theta}[k(X_i,X)]
+  \frac{1}{n^2}\sum_{1\leq i,j\leq n} k(X_i,X_j)
$$
or, equivalently,
$$
{\rm Crit}(\theta)
= \mathbb{E}_{X,X'\sim P_\theta} [k(X,X')]
- \frac{2}{n}\sum_{i=1}^n \mathbb{E}_{X\sim P_\theta}[k(X_i,X)].
$$
In order to use gradient algorithms or any first order method, a first step is to compute the gradient of this quantity with respect to $\theta$.

\begin{prop}
\label{prop:gradient}
 Assume that each $P_\theta$ has a density $p_{\theta}$ with respect to the Lebesgue measure. Assume that for any $x$, $\theta\mapsto p_{\theta}(x)$ is differentiable with respect to $\theta$ and that there is a nonnegative function $g(x,x')$ such that, for any $\theta\in\Theta$, $|k(x,x')\nabla_{\theta}[p_{\theta}(x) p_{\theta}(x')] | \leq g(x,x') $ and $\iint g(x,x') \mu({\rm d}x )\mu({\rm d}x')< \infty$. Then
 $$
 \nabla_\theta {\rm Crit}(\theta)
 =
 2 \mathbb{E}_{X,X'\sim P_\theta} \left[ \left( k(X,X') - \frac{1}{n}\sum_{i=1}^n k(X_i,X) \right) \nabla_\theta[\log p_\theta(X) ]\right].
 $$
\end{prop}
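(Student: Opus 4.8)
The plan is to differentiate the criterion term by term, interchanging the gradient with the expectation (integral against the Lebesgue measure), and then rewrite the resulting integrals back as expectations using the score-function trick $\nabla_\theta p_\theta = p_\theta \nabla_\theta \log p_\theta$. Recall that
$$
{\rm Crit}(\theta) = \mathbb{E}_{X,X'\sim P_\theta}[k(X,X')] - \frac{2}{n}\sum_{i=1}^n \mathbb{E}_{X\sim P_\theta}[k(X_i,X)],
$$
so there are two pieces: a quadratic term depending on $\theta$ through two copies of $P_\theta$, and a linear term depending on $\theta$ through a single copy of $P_\theta$.

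First I would treat the quadratic term. Writing it as $\iint k(x,x') p_\theta(x) p_\theta(x')\,{\rm d}x\,{\rm d}x'$, I would apply the product rule under the integral sign to get $\iint k(x,x')[\nabla_\theta p_\theta(x)] p_\theta(x')\,{\rm d}x\,{\rm d}x' + \iint k(x,x') p_\theta(x)[\nabla_\theta p_\theta(x')]\,{\rm d}x\,{\rm d}x'$. By symmetry of $k$ and relabeling the dummy variables $x \leftrightarrow x'$, these two contributions are equal, giving a factor of $2$. Substituting $\nabla_\theta p_\theta(x) = p_\theta(x)\nabla_\theta \log p_\theta(x)$ turns this into $2\,\mathbb{E}_{X,X'\sim P_\theta}[k(X,X')\nabla_\theta \log p_\theta(X)]$. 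For the linear term, a single application of the same substitution to $\iint k(X_i,x)p_\theta(x)\,{\rm d}x$ (summed over $i$ and scaled by $2/n$) yields $\frac{2}{n}\sum_{i=1}^n \mathbb{E}_{X\sim P_\theta}[k(X_i,X)\nabla_\theta \log p_\theta(X)]$. Combining the two terms and factoring out the common $\nabla_\theta\log p_\theta(X)$ gives the claimed formula, where the inner factor is $k(X,X') - \frac{1}{n}\sum_i k(X_i,X)$.

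The main obstacle is justifying the interchange of $\nabla_\theta$ and the integral. This is exactly what the domination hypothesis is for: the assumption that $|k(x,x')\nabla_\theta[p_\theta(x)p_\theta(x')]| \leq g(x,x')$ with $\iint g(x,x')\,\mu({\rm d}x)\mu({\rm d}x') < \infty$ provides a $\theta$-uniform integrable envelope, so the dominated convergence theorem (in its differentiation-under-the-integral form) applies to the quadratic term. I expect the bound on the quadratic term also to control the linear term, since $|k(X_i,x)\nabla_\theta p_\theta(x)|$ can be dominated after a single integration using the same $g$ (the $\Phi_i$ and $\delta_{X_i}$ play the role of a fixed second measure), so no additional hypothesis is needed. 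Once the interchange is licensed, the remaining manipulations—the product rule, the symmetrization, and the score substitution—are routine, and the factor-of-two bookkeeping in the quadratic term is the only place where care is required.
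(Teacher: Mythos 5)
Your proposal is correct and follows essentially the same route as the paper's proof: rewrite the two terms as integrals against the Lebesgue measure, differentiate under the integral sign using the domination hypothesis, apply the product rule and the symmetry of $k$ to get the factor of $2$ on the quadratic term, and substitute $\nabla_\theta p_\theta = p_\theta \nabla_\theta \log p_\theta$ to recombine everything into a single expectation. The paper is equally brief about the interchange for the linear term, so no discrepancy there.
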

Note that the gradient of ${\rm Crit}(\theta)$ is given by an expectation with respect to $P_\theta$. So, as soon as it is feasible to sample from $P_\theta$, on can provide unbiased estimates of $\nabla_\theta {\rm Crit}(\theta)$, and thus implement a stochastic gradient algorithm.
\begin{rmk}
It might be that in special cases, we have explicit formulas for the expectations in ${\rm Crit}(\theta)$ and its gradient. For example, assume that we are a translation parameter, that is: $p_\theta(x) = f(x-\theta)$ for some density $f$, and that the kernel $k$ is given by $k(x,x')=K(x-x')$ for some function $K$. Then 
\begin{align*}
 {\rm Crit}(\theta)
& = \iint K(x-x') f(x-\theta) f(x'-\theta)\mu({\rm d}x)\mu({\rm d}x') 
- \frac{2}{n}\sum_{i=1}^n \int K(X_i-x) f(x-\theta) \mu({\rm d}x) \\
& = \iint K(x-x') f(x) f(x')\mu({\rm d}x)\mu({\rm d}x') 
- \frac{2}{n}\sum_{i=1}^n \int K(\theta+x-X_i) f(x) \mu({\rm d}x).
\end{align*}
For example, in the case $P_\theta = \mathcal{U}[\theta-1/2,\theta+1/2]$ we have
\begin{align*}
 {\rm Crit}(\theta)
&  =
\iint_{[-1/2,1/2]^2} K(x-x') {\rm d}x {\rm d}x'
- \frac{2}{n}\sum_{i=1}^n \int_{-1/2}^{1/2} K(\theta+x-X_i){\rm d}x \\
& = \iint_{[-1/2,1/2]^2} K(x-x') {\rm d}x {\rm d}x'
- \frac{2}{n}\sum_{i=1}^n \int_{\theta-1/2-X_i}^{\theta+1/2-X_i} K(u) {\rm d}u
\end{align*}
and thus
$$
\nabla_\theta  {\rm Crit}(\theta) = -\frac{2}{n}\sum_{i=1}^n [K(\theta+1/2-X_i)-K(\theta-1/2-X_i)].
$$
So, in this special case, the estimation of the gradient is unnecessary and we can use a proper gradient algorithm to compute $\hat{\theta}_n$.
\end{rmk}

\subsection{Projected stochastic gradient algorithm for the MMD estimator}
\label{subsec:SGA}

From Proposition~\ref{prop:gradient},
 $$
 \nabla_\theta {\rm Crit}(\theta)
 =
 2 \mathbb{E}_{X,X'\sim P_\theta} \left[ \left( k(X,X') - \frac{1}{n}\sum_{i=1}^n k(X_i,X) \right) \nabla_\theta[\log p_\theta(X) ]\right].
 $$
So, if we can compute $\nabla[\log p_\theta(x) ]$ and if it is feasible to simulate from $P_\theta$, we can easily compute a Monte Carlo estimator of $\nabla_\theta {\rm Crit}(\theta)$ and thus use a stochastic gradient descent (SGD). First, simulate $(Y_1,\dots,Y_M)$ i.i.d from $P_\theta$, then put
$$
\widehat{\nabla_\theta {\rm Crit}} (\theta)
= \frac{2}{M}\sum_{j=1}^M \left( \frac{1}{M-1} \sum_{\ell \neq j} k(Y_j,Y_\ell) - \frac{1}{n} \sum_{i=1}^nk(X_i,Y_j) \right)  \nabla_\theta[\log p_\theta(Y_j) ].
$$
We now provide the details of a projected stochastic gradient algorithm (PSGA). The projection step is necessary if $\Theta\subsetneq \mathbb{R}^d$. Thus, we assume that $\Theta\subseteq\mathbb{R}^d$ is a closed and convex subset and let $\Pi_\Theta$ denote the orthogonal projection on $\Theta$.

\begin{algorithm}
\caption{PSGA for MMD}
\begin{algorithmic}[1]
\State \textbf{Input}: a dataset $(X_1,...,X_n)$, a model $(P_\theta,\theta\in\Theta\subset\mathbb{R}^d)$ a kernel $k$, a sequence of steps $(\eta_t)_{t\geq 1}$, an integer $M$, a stopping time $T$.
\State \textbf{Initialize} $\theta^{(0)}\in\Theta$, $t=0$.
\State \textbf{For} $t=1,\dots,T$
   \State draw $(Y_1,\dots,Y_M)$ i.i.d from $P_{\theta^{(t-1)}}$,
   \State $\theta^{(t)} = \Pi_{\Theta} \left\{ \theta^{(t-1)} - \frac{2 \eta_t }{M}\sum_{j=1}^M \left[ \frac{1}{M-1} \sum_{\ell \neq j} k(Y_j,Y_\ell) - \frac{1}{n} \sum_{i=1}^n  k(X_i,Y_j) \right]  \nabla_{\theta^{(t-1)}}[\log p_{\theta^{(t-1)}}(Y_j) ] \right\} $
\State \textbf{End for}
\end{algorithmic}
\end{algorithm}

\subsection{Theoretical analysis of the algorithm}
\label{subsec:thm:algo}

In its original version, the stochastic gradient algorithm was proposed with a sequence of steps $(\eta)_t$ such that $\eta_t \rightarrow 0$ and $\sum_{t}\eta_t = \infty$. However,~\cite{Nemi2009} proved that the method can be made more robust by taking a constant step size $\eta_t = \eta$ and by averaging the parameters. The following proposition is actually a direct application of the results of~\cite{Nemi2009}.

\begin{prop}
 \label{prop:SGA}
Under the conditions of Proposition~\eqref{prop:gradient} above, and under the assumption that $\Theta$ is closed, convex and bounded with $\mathcal{D} = \sup_{(\theta,\theta')\in\Theta^2} \|\theta-\theta'\| $, define
$$ \hat{\theta}^{(T)}_n = \frac{1}{T}\sum_{t=1}^T \theta^{(t)} $$
where the $\theta^{(t)}$'s are given by Algorithm~1 above. Assume that, for any $\theta\in\Theta$,
$$ \mathbb{E} \left[ \| \widehat{\nabla_\theta {\rm Crit}} (\theta)\|^2\right] \leq M^2. $$
Assume that ${\rm Crit}(\theta)$ is a convex function of $\theta$. Then the choice $\eta=\mathcal{D}/(M \sqrt{T})$ leads to
\begin{equation}
\label{eq:nemi}
\mathbb{E}\left[ {\rm Crit}(\hat{\theta}^{(T)}_n) - {\rm Crit}(\hat{\theta}_n) \right]  \leq \frac{\mathcal{D} M}{\sqrt{T}} ,
\end{equation}
where the expectation $\mathbb{E}$ is taken with respect to drawings of the $Y_i$'s in Algorithm 1. Moreover
$$ \mathbb{E}\left[ \mathbb{D}_{k}\left(P_{\hat{\theta}^{(T)}_n},P^0\right) \right] \leq \inf_{\theta\in\Theta}  \mathbb{D}_{k}(P_{\theta},P^0) + 3\sqrt{\frac{1+2\sum_{t=1}^n \varrho_t }{n}} + \sqrt{\frac{\mathcal{D}M}{\sqrt{T}}} $$
where the expectation is taken with repect to the sample and to the $Y_i$'s, and the choice $T=n^2$ leads to
$$ \mathbb{E}\left[ \mathbb{D}_{k}\left(P_{\hat{\theta}^{(n^2)}_n},P^0\right) \right] \leq \inf_{\theta\in\Theta}  \mathbb{D}_{k}(P_{\theta},P^0) + \frac{\sqrt{\mathcal{D}M} + 3\sqrt{1+2\sum_{t=1}^n \varrho_t }}{n} .$$
\end{prop}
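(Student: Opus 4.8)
The plan is to combine two essentially independent ingredients: the optimization guarantee for averaged projected stochastic gradient descent due to~\cite{Nemi2009}, which yields~\eqref{eq:nemi}, and the generalization argument already used to prove Theorem~\ref{theorem:mmd:1}. First I would establish~\eqref{eq:nemi}. Since $\Theta$ is closed, convex and bounded with diameter $\mathcal{D}$, since ${\rm Crit}$ is assumed convex, and since the stochastic gradient is assumed to satisfy $\mathbb{E}[\|\widehat{\nabla_\theta {\rm Crit}}(\theta)\|^2]\leq M^2$, the only hypothesis of the robust stochastic approximation theorem of~\cite{Nemi2009} that remains to be checked is that $\widehat{\nabla_\theta {\rm Crit}}(\theta)$ is an unbiased estimate of $\nabla_\theta {\rm Crit}(\theta)$.

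To verify unbiasedness I would take the expectation of $\widehat{\nabla_\theta {\rm Crit}}(\theta)$ over the i.i.d. draws $Y_1,\dots,Y_M\sim P_\theta$ and compare with Proposition~\ref{prop:gradient}. The term $\frac{1}{n}\sum_i k(X_i,Y_j)\nabla_\theta[\log p_\theta(Y_j)]$ is immediately unbiased because $Y_j\sim P_\theta$, while the leave-one-out average $\frac{1}{M-1}\sum_{\ell\neq j}k(Y_j,Y_\ell)\nabla_\theta[\log p_\theta(Y_j)]$ is unbiased precisely because $Y_j$ and $Y_\ell$ are independent for $\ell\neq j$, so that each summand has expectation $\mathbb{E}_{X,X'\sim P_\theta}[k(X,X')\nabla_\theta[\log p_\theta(X)]]$; this is exactly why $\ell=j$ must be excluded from the U-statistic. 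Matching with Proposition~\ref{prop:gradient} gives $\mathbb{E}[\widehat{\nabla_\theta {\rm Crit}}(\theta)]=\nabla_\theta {\rm Crit}(\theta)$. Applying~\cite{Nemi2009} with the step size $\eta=\mathcal{D}/(M\sqrt{T})$, and using that $\inf_\theta {\rm Crit}(\theta)={\rm Crit}(\hat{\theta}_n)$ by definition of $\hat{\theta}_n$, then yields~\eqref{eq:nemi}, where the expectation is over the $Y_i$'s conditionally on the data.

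Next I would convert this optimization error into an MMD bound. Writing ${\rm Crit}(\theta)=\mathbb{D}_k^2(P_\theta,\hat{P}_n)-C_n$ with $C_n=n^{-2}\sum_{i,j}k(X_i,X_j)$ independent of $\theta$, the identity $\mathbb{D}_k^2(P_{\hat{\theta}^{(T)}_n},\hat{P}_n)=\mathbb{D}_k^2(P_{\hat{\theta}_n},\hat{P}_n)+[{\rm Crit}(\hat{\theta}^{(T)}_n)-{\rm Crit}(\hat{\theta}_n)]$ together with $\sqrt{a+b}\leq\sqrt{a}+\sqrt{b}$ gives $\mathbb{D}_k(P_{\hat{\theta}^{(T)}_n},\hat{P}_n)\leq \mathbb{D}_k(P_{\hat{\theta}_n},\hat{P}_n)+\sqrt{{\rm Crit}(\hat{\theta}^{(T)}_n)-{\rm Crit}(\hat{\theta}_n)}$. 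I would then reproduce the triangle-inequality steps from the proof of Theorem~\ref{theorem:mmd:1}: $\mathbb{D}_k(P_{\hat{\theta}^{(T)}_n},P^0)\leq \mathbb{D}_k(P_{\hat{\theta}^{(T)}_n},\hat{P}_n)+\mathbb{D}_k(\hat{P}_n,P^0)$ and $\mathbb{D}_k(P_{\hat{\theta}_n},\hat{P}_n)=\inf_\theta \mathbb{D}_k(P_\theta,\hat{P}_n)\leq \inf_\theta \mathbb{D}_k(P_\theta,P^0)+\mathbb{D}_k(P^0,\hat{P}_n)$, so that the optimization error and the empirical fluctuation $\mathbb{D}_k(\hat{P}_n,P^0)$ decouple additively; tracking how loosely these triangle inequalities are combined fixes the numerical constants.

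It remains to take expectations over everything, using the tower property to pass from the conditional bound~\eqref{eq:nemi} to the unconditional one. Here I would reuse the computation behind Theorem~\ref{theorem:mmd:1}, namely $\mathbb{E}[\mathbb{D}_k^2(\hat{P}_n,P^0)]\leq (1+2\sum_{t=1}^n\varrho_t)/n$, obtained by expanding $\|\mu_{\hat{P}_n}-\mu_{P^0}\|_{\mathcal{H}_k}^2$, bounding the $n$ diagonal terms by the kernel bound and the off-diagonal ones by $\varrho_{|i-j|}$; Jensen's inequality then controls $\mathbb{E}[\mathbb{D}_k(\hat{P}_n,P^0)]$, while a second application of Jensen to $\sqrt{{\rm Crit}(\hat{\theta}^{(T)}_n)-{\rm Crit}(\hat{\theta}_n)}$ combined with~\eqref{eq:nemi} bounds the optimization term by $\sqrt{\mathcal{D}M/\sqrt{T}}$. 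Collecting the terms yields the displayed bound, and substituting $T=n^2$, which makes the optimization term of the same $n^{-1/2}$ order as the statistical term, produces the final inequality. I expect the main difficulty to be bookkeeping rather than conceptual: the delicate points are the unbiasedness argument that forces the leave-one-out U-statistic, and the careful layering of the conditional SGD expectation with the unconditional generalization expectation, together with the two Jensen steps needed to move the square roots outside the expectations.
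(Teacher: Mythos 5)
Your proposal is correct and follows essentially the same route as the paper's proof: invoke the robust stochastic approximation bound of~\cite{Nemi2009} for~\eqref{eq:nemi}, pass from the criterion gap to the MMD gap via $\sqrt{a+b}\leq\sqrt{a}+\sqrt{b}$ (using that ${\rm Crit}$ and $\mathbb{D}_k^2(\cdot,\hat{P}_n)$ differ by a $\theta$-independent constant), and then reuse the triangle-inequality and Lemma~\ref{lemma:mmd:1}/Theorem~\ref{theorem:mmd:1} machinery with Jensen to take expectations. Your explicit unbiasedness check for the leave-one-out gradient estimator is a detail the paper leaves implicit, and your slightly tighter combination of triangle inequalities would even yield constant $2$ in place of $3$, so the stated bound follows a fortiori.
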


The restrictive assumption in this proposition is the convexity assumption on the criterion. However, it is satisfied in some of the examples of Section~\ref{sec:examples}.

\begin{exm}
Let us come back to the ``estimation with a dictionary'' example of Section~\ref{sec:examples}: $P_\theta$ is given by its density
$$ p_{\theta} = \sum_{\ell=1}^D \theta_\ell \Phi_\ell .$$
Let us assume that $\Theta=\mathcal{S}_D$ and the $\Phi_\ell$'s are probability densities. Then $\Theta$ closed, convex and bounded with $\mathcal{D}=1$. Moreover,
$$
\widehat{\nabla_\theta {\rm Crit}} (\theta)
= \frac{2}{M}\sum_{j=1}^M \left[ \frac{1}{M-1} \sum_{\ell \neq j} k(Y_j,Y_\ell) - \frac{1}{n} \sum_{i=1}^nk(X_i,Y_j) \right]  \nabla_\theta[\log p_\theta(Y_j) ].
$$
and
$$
\nabla_\theta[\log p_\theta(Y_j) ]
=
\left(
\begin{array}{c}
 \frac{\Phi_1(Y_j)}{\sum_{\ell=1}^D \theta_\ell \Phi_\ell(Y_j) }
 \\
 \vdots
 \\
 \frac{\Phi_D(Y_j)}{\sum_{\ell=1}^D \theta_\ell \Phi_\ell(Y_j) }
\end{array}
\right).
$$
Consequently,
\begin{align*}
\left\|
\widehat{\nabla_\theta {\rm Crit}} (\theta)
\right\|^2
& = \sum_{\ell=1}^D \left( \frac{2}{M}\sum_{j=1}^M \left[ \frac{1}{M-1} \sum_{\ell \neq j} k(Y_j,Y_\ell) - \frac{1}{n} \sum_{i=1}^nk(X_i,Y_j) \right]   \frac{\Phi_\ell(Y_j)}{\sum_{\ell=1}^D \theta_\ell \Phi_\ell(Y_j) }  \right)^2
\\
& \leq\sum_{\ell=1}^D \frac{4}{M^2} \sum_{1\leq j,k \leq M}   \frac{\Phi_\ell(Y_j)\Phi_\ell(Y_k)}
{\left(\sum_{\ell=1}^D \theta_\ell \Phi_\ell(Y_j)\right) \left(\sum_{\ell=1}^D \theta_\ell \Phi_\ell(Y_k)\right) }
\\
& =  \frac{4}{M^2} \sum_{1\leq j,k \leq M}   \frac{\sum_{\ell=1}^D \Phi_\ell(Y_j)\Phi_\ell(Y_k)}
{p_{\theta}(Y_j) p_{\theta}(Y_k) },
\end{align*}
and then
\begin{align*}
 \mathbb{E} \left( \left\|
\widehat{\nabla_\theta {\rm Crit}} (\theta)
\right\|^2 \right)
& \leq \iint 4   \frac{\sum_{\ell=1}^D \Phi_\ell(y)\Phi_\ell(y')}
{p_{\theta}(y) p_{\theta}(y') } p_{\theta}(y) p_{\theta}(y') {\rm d}y {\rm d}y'
\\
& = 4 \iint \sum_{\ell=1}^D \Phi_\ell(y)\Phi_\ell(y') {\rm d}y {\rm d}y'
\\
& = 4 D.
\end{align*}
Hence Proposition~\ref{prop:SGA} leads to
$$ \mathbb{E}\left[ {\rm Crit}(\hat{\theta}^{(T)}_n) - {\rm Crit}(\hat{\theta}_n) \right]  \leq \sqrt{\frac{4D}{T}} = 2\sqrt{\frac{D}{T}} .  $$
\end{exm}

\begin{rmk}
In many examples, the MMD criterion is not convex, so we cannot apply Proposition~\ref{prop:SGA}. This includes for example the estimation of the mean of a Gaussian distribution. However, the simulation study below shows that the stochastic gradient algorithm still provides excellent results, even though we cannot prove that it reached a global minimum of the criterion.

In some sense, the situation is similar to the estimation of mixtures with the EM algorithm: we cannot prove that the algorithm will not get trapped in a local minimum, but the algorithm is still extremely valuable in practice. Note that many strategies were proposed in order to avoid local minima for EM: for example, multiple runs of the algorithm, with randomized initializations. This strategy works well, at least in reasonnably small dimension, even though improvements are possible~\cite{EM2012}.
\end{rmk}

 \section{Simulation study}
\label{sec:simu}

In this section, we test our stochastic gradient algorithm in the Gaussian mean estimation and in the Gaussian mixture estimation settings. In all the experiments, we chose a number of Monte-Carlo samples equal to $n$ and a step-size of $\eta_t = 1/\sqrt{t}$, and we used the Gaussian kernel $k(x, y) = e^{-\|x-y\|_2^2/d}$ where $d$ is the dimension. Each experiment is repeated 10 times.



\subsection{Gaussian mean estimation}
First, we estimate the mean of a Gaussian distribution $\mathcal{N}(\mathbf{0},I_d)$ where $I_d$ is the identity matrix of dimension $d$ and where more generally, $\mathbf{a}$ is the vector with all components equal to $a\in\mathbb{R}$. We provide numerical experiments to illustrate and validate our theory on the non-minimax optimality given in Section \ref{sec:examples}. We verify the rates we obtained from a theoretical point of view by exploring via numerical experiments various types of contamination distributions $Q$ and different proportions of outliers. The MMD gradient descent is compared with the maximum likelihood estimator which is here the arithmetic mean, the componentwise median, and the JS-GAN studied in \cite{ChaoGaoRobustGAN2019}, which is known to be robust and even minimax optimal. Note that in \cite{ChaoGaoRobustGAN2019}, JS-GAN outperforms iterative filtering and dimension halving in all experiments, so we don't include these two methods here. The metric considered here is the square root of the mean square error (MSE) over all the 50 repetitions. We focus on the scenario where $d/n<\varepsilon$ with $d=10$, $n=500$ and $\varepsilon=0.2$. We are interested here in the influence of the contamination distribution $Q$ on the MSE. The results are reported on Table \ref{table:only1worstcase}, where the bold character marks the lowest MSE among all methods for each contamination Q. As expected, we can see that there is mainly one "worst-case contamination" (the point-mass distribution in a $\sqrt{d}$-far contamination parameter) for which the MMD estimator performs poorly. At the opposite, in all other situations, the MMD estimator is one of the best methods and is not really affected by the distance of the contamination parameter to $\theta_0$. In particular, MMD estimation is competitive with the minimax-optimal JS-GAN procedure.

\begin{table}[ht]
 \centering 
\begin{tabular}{c c c c c c c c c} 
\hline\hline 
Method & $\mathcal{N}(\mathbf{0.2},I_d)$ & $\mathcal{N}(\mathbf{0.5},I_d)$ & $\mathcal{N}(\mathbf{1},I_d)$ & $\mathcal{N}(\mathbf{5},I_d)$ & $\mathcal{N}(\mathbf{10},I_d)$ & $\mathcal{C}(\mathbf{0.5})$ & $\delta_{\{\mathbf{1}\}}$ & $\delta_{\{\mathbf{10}\}}$ \\ [0.5ex] 
\hline 
Mean & \textbf{0.0379} & 0.0954 & 0.2033 & 1.0166 & 1.9915 & 0.3577 & 0.2057 & 2.0048 \\ 
 & \textbf{(0.0046)} & (0.0039) & (0.0115) & (0.0145) & (0.0153) & (0.6451) & (0.0115) & (0.0156) \\ 
Median & 0.0387 & \textbf{0.0893} & 0.1756 & 0.3106 & 0.3345 & \textbf{0.0769} & 0.3194 & 0.3258 \\
 & (0.0158) & \textbf{(0.0098)} & (0.0058) & (0.0109) & (0.0164) & \textbf{(0.0232)} & (0.0215) & (0.0098) \\ 
JS-GAN & 0.1848 & 0.2036 & 0.2172 & 0.1879 & 0.2204 & 0.2276 & \textbf{0.1969} & 0.1877 \\ 
 & (0.0443) & (0.0346) & (0.0241) & (0.0287) & (0.0423) & (0.0376) & \textbf{(0.0342)} & (0.0324) \\ 
MMD & 0.0654 & 0.1172 & \textbf{0.1730} & \textbf{0.0634} & \textbf{0.0681} & 0.0882 & 0.3622 & \textbf{0.0601} \\ 
 & (0.0132) & (0.0199) & \textbf{(0.0077)} & \textbf{(0.0081)} & \textbf{(0.0157)} & (0.0140) & (0.0212) & \textbf{(0.0157)} \\ 
[1ex] 
\hline 
\end{tabular}
\label{table:only1worstcase} 
\caption{Square root of the MSE for several choices of $Q$ (with standard deviations in parenthesis from the 50 repeated experiments.). Here, $\varepsilon=0.2$, $d=10$ and $n=500$ such that $\sqrt{d/n}<\varepsilon$. Chosen structure of the network for the GAN: 1 hidden layer with 5 hidden units (as suggested in \cite{ChaoGaoRobustGAN2019}). The bold character marks the lowest MSE among all methods for each $Q$. The MMD estimator performs poorly only when there are outliers on the sphere of radius $\sqrt{d}$ centered at $\mathbf{0}$, i.e. $Q=\delta_{\{\theta_c\}}$ with $\|\theta_0-\theta_c\|=\sqrt{d}$.
} 
\end{table}

Additionally, Figure \ref{table:gaussianeps} shows that the estimation error is linear with respect to the proportion of outliers $\varepsilon$ in dimension $d=10$ with a sample size $n=5000$. We chose the contamination $Q=\mathcal{N}(\mathbf{5},I_d)$, but this choice is not crucial and other choices of the contamination distribution would lead to the same results.

\begin{figure}[htbp]
  \centering
  \includegraphics[width=0.5\linewidth]{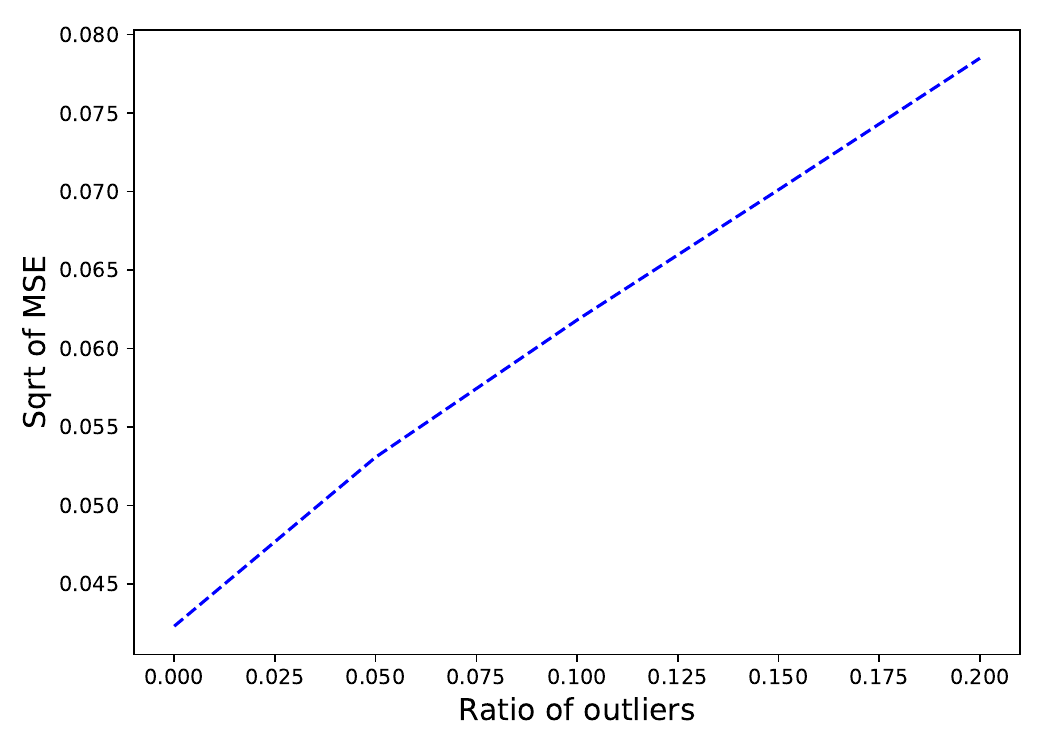}
  \vspace{-0.5cm}
  \caption{Mean square error as a function of the outliers ratio $\varepsilon$, for a dimension $d=10$, a sample size $n=5000$, and a Gaussian contamination $Q=\mathcal{N}(\mathbf{5},I_d)$. The error grows linearly as the ratio increases.}
   \label{table:gaussianeps} 
\end{figure}

Similarly, Figure \ref{table:gaussiand} shows the effect of the dimension on the estimation error, using $\varepsilon=0.1$ and $n=5000$. We can see that for a "harmless contamination" $Q=\mathcal{N}(\mathbf{5},I_d)$ (blue curve), there is no effect at all of the dimension on the estimation error. This result is still valid for other contaminations. At the opposite, when choosing the "worst-case contamination" $Q=\delta_{\{\mathbf{1}\}}$ (red curve), such that the distance between the true parameter and the corrupted mean is $\sqrt{d}$, then the estimation error grows linearly in the square root of the dimension, which explains the rate of convergence of the MMD estimator.

\begin{figure}[htbp]
 \centering
 \includegraphics[width=0.5\linewidth]{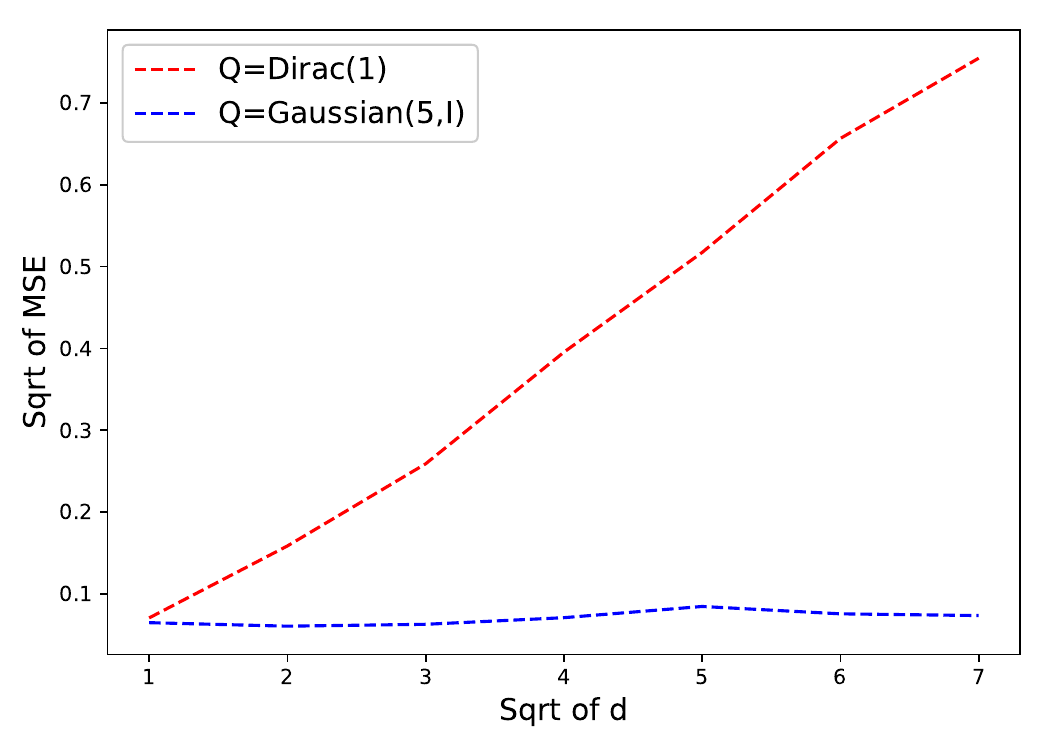}
  \vspace{-0.2cm}
 \caption{Mean square error as a function of the square root of the dimension $\sqrt{d}$, for an outlier ratio $\varepsilon=0.1$, a sample size $n=5000$, and two different contaminations: a "harmless" Gaussian $Q=\mathcal{N}(\mathbf{5},I_d)$ and a "worst-case" Dirac $Q=\delta_{\{\mathbf{1}\}}$. The error grows linearly in the Dirac case but is not affected by the dimension in the Gaussian case.}
 \label{table:gaussiand} 
\end{figure}

\vspace{-0.2cm}

\subsection{Gaussian mixture estimation}

In the second experiment of this paper, we sample data according to a three component Gaussian mixture $0.3. \mathcal{N}(-3.72,1) + 0.3 .\mathcal{N}(0.11,1) + 0.4 .\mathcal{N}(4.54,1)$. Here, we use the same approach than in Section \ref{dictionary}. We try to estimate the mixture as a linear combination of mixture in a dictionary composed of all Gaussians of variance 1 and whose means range from -5 to 5 with a stepsize of 0.02. Note that the Gaussian $\mathcal{N}(0.11,1)$ is not even in the dictionary. The goal is to estimate the weights of each Gaussian in the dictionary using MMD estimation. This estimation method is compared to the gold standard Expectation-Maximization (EM) \cite{EM} algorithm and to the tempered Coordinate Ascent Variational Inference (CAVI) algorithm \cite{BECA2018,blei2017variational} that estimate directly the means and the weights of the three-component mixture, using ten random initializations. The experiment is conducted first without any outlier, and then with an outlier equal to 100. Here, the metric is a mean average error (MAE) between the densities. First, we sample 10.000 datapoints independently according to the true mixture. Then, we evaluate the  difference between the true density $p^0$ and the estimated density $p_{\hat{\theta}_n}$ evaluated at each of the 10.000 datapoints, and we finally take the average:
$$
\left\{
\begin{array}{l}
  z_1,...,z_{N} \stackrel{i.i.d}{\sim} p^0 \hspace{0.1cm} \textnormal{where} \hspace{0.1cm} N=10.000 , \\
  \textnormal{MAE} =  \frac{1}{N} \sum\limits_{\ell=1}^{N} \left| p^0(z_\ell) - p_{\hat{\theta}_n}(z_\ell) \right| .
\end{array}
\right.
$$
Again, the final metric is the average over 50 repetitions of the experiment. Figures 4, 5 and 6, and Table 1 clearly show that our estimator performs comparably to both the EM and the CAVI algorithms in the well-specified case, while it is the only one that is not sensitive to the outlier and that gives a consistent estimate.

\begin{table}[ht]
 \centering 
\begin{tabular}{c c c} 
\hline\hline 
Algorithm & Without the outlier & With the outlier \\ [0.5ex] 
\hline 
\textbf{MMD} & \textbf{0.0170 (0.0052)} & \textbf{0.0173 (0.0045)} \\ 
CAVI & 0.0218 (0.0172) & 0.0976 (0.0002) \\
EM & 0.0186 (0.0147) & 0.0738 (0.0186) \\ [1ex] 
\hline 
\end{tabular}
\label{table:nonlin} 
\caption{MAE for the Gaussian mixture with/without the outlier (with the corresponding standard deviations)} 
\end{table}

\begin{figure}[htbp]
  \centering
  \includegraphics[width=1.0\linewidth]{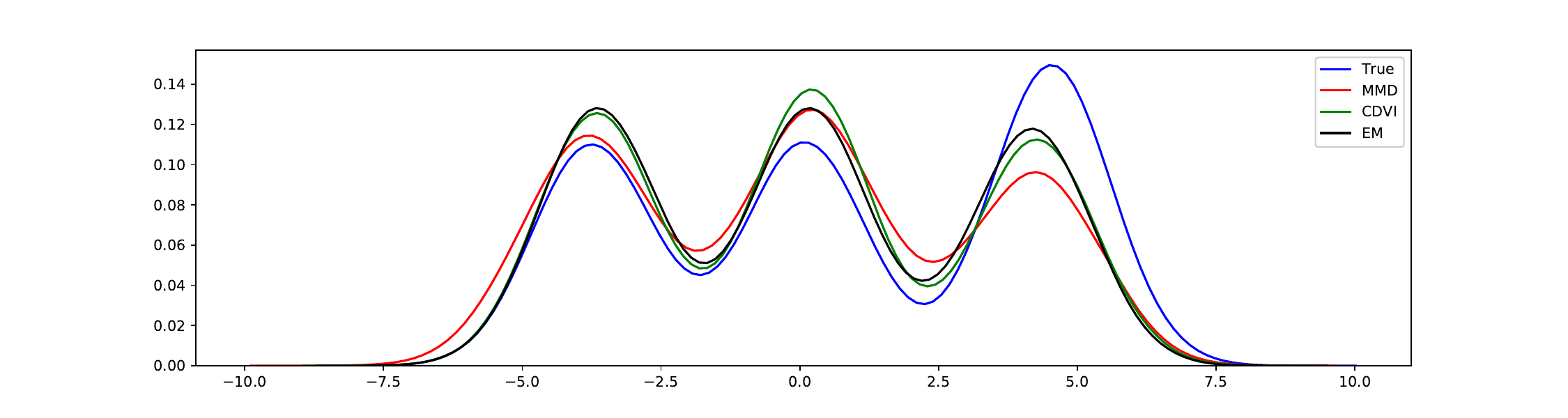}
  \vspace{-0.7cm}
  \caption{Plot of the estimated densities using different methods without outliers. The blue curve represents the true density, the red one the MMD density, the green one the CAVI density and the black one the EM density.}
\end{figure}

\begin{figure}[htbp]
  \includegraphics[width=1.0\linewidth]{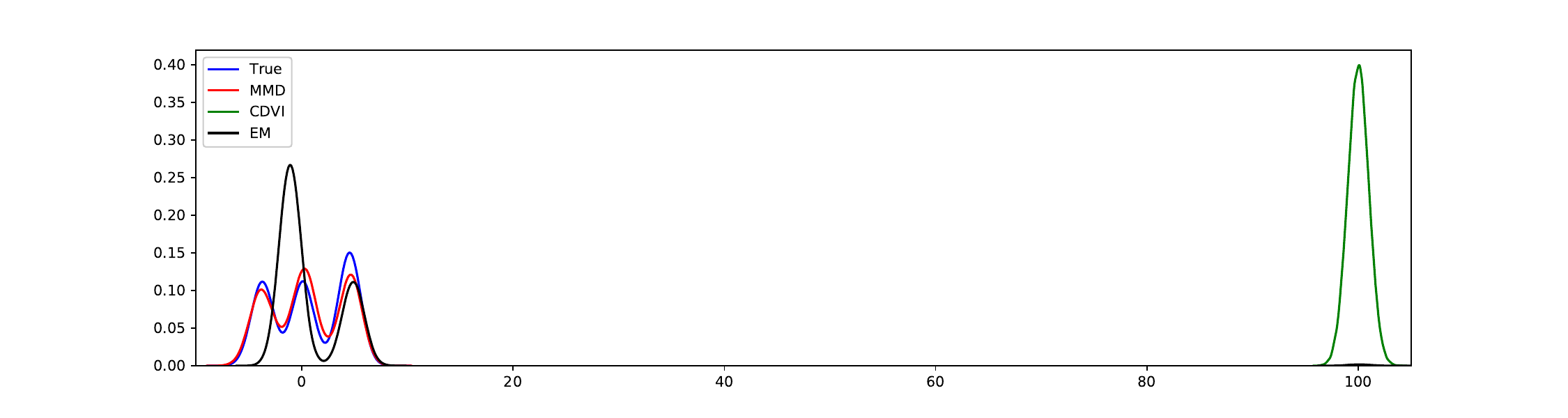}
  \vspace{-0.7cm}
  \caption{Plot of the estimated densities using different methods in presence of 1 outlier at 100. The blue curve represents the true density, the red one the MMD density, the green one the CAVI density and the black one the EM density. The EM estimate has a small component at 100, and CAVI only one component at 100.}
\end{figure}

\begin{figure}[htbp]
  \includegraphics[width=1.0\linewidth]{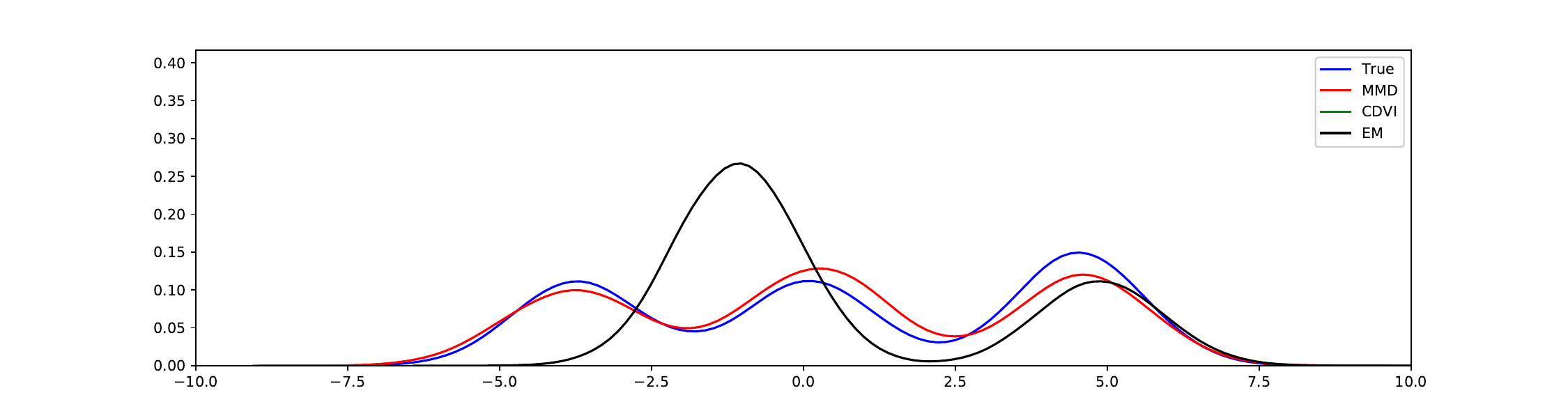}
  \vspace{-0.7cm}
  \caption{Zoom of Figure 5, without the component of EM at 100.}
\end{figure}

\section{Proofs of the main theorems}
\label{sec:proofs}

\subsection{A preliminary lemma: convergence of $\hat{P}_n$ to $P^0$ with respect to $\mathbb{D}_k$}

\begin{lemma}
 \label{lemma:mmd:1}
 We have
 $$ \mathbb{E} \left[ \mathbb{D}_k^2\left( \hat{P}_n,P^0 \right) \right] \leq \frac{  1 + 2 \sum_{t=1}^n \left(1-\frac{t}{n}\right)\varrho_t }{n}. $$
\end{lemma}

\begin{proof}
First, note that $\mathbb{E} ( \left\| k(X_i,\cdot) - \mu_{P^0} \right\|_{{\mathcal{H}_{k}}}^2) \leq \mathbb{E} (\left\| k(X_i,\cdot) \right\|_{{\mathcal{H}_{k}}}^2) $. This formula is the RKHS version of ${\rm Var}(X) \leq \mathbb{E}(X^2)$ and is proven in the following way:
\begin{align}
 \mathbb{E} \left( \left\| k(X_i,\cdot) - \mu_{P^0} \right\|_{{\mathcal{H}_{k}}}^2 \right)
 & =  \mathbb{E} \left( \left\| k(X_i,\cdot) \right\|_{{\mathcal{H}_{k}}}^2 \right) - 2 \mathbb{E} \left( \left< k(X_i,\cdot) , \mu_{P^0}  \right>_{{\mathcal{H}_{k}}}^2 \right) +  \mathbb{E} \left( \left\| \mu_{P^0} \right\|_{{\mathcal{H}_{k}}}^2 \right)
 \nonumber
 \\
 & = \mathbb{E} \left( \left\| k(X_i,\cdot) \right\|_{{\mathcal{H}_{k}}}^2 \right) - 2  \left< \mathbb{E} \left( k(X_i,\cdot) \right), \mu_{P^0}  \right>_{{\mathcal{H}_{k}}}^2 +  \mathbb{E} \left( \left\| \mu_{P^0} \right\|_{{\mathcal{H}_{k}}}^2 \right)
 \nonumber
 \\
 & = \mathbb{E} \left( \left\| k(X_i,\cdot) \right\|_{{\mathcal{H}_{k}}}^2 \right) -  \mathbb{E} \left( \left\| \mu_{P^0} \right\|_{{\mathcal{H}_{k}}}^2 \right) \leq \mathbb{E} \left( \left\| k(X_i,\cdot) \right\|_{{\mathcal{H}_{k}}}^2 \right).
 \label{var.RKHS}
\end{align}
Let us now prove the lemma. We have
\begin{align*}
\mathbb{E} \left[ \mathbb{D}_k^2\left( \hat{P}_n,P^0 \right) \right] 
& = \mathbb{E} \left\{  \left\| \frac{1}{n}\sum_{i=1}^{n}\left[ k(X_i,\cdot) - \mu_{P^0} \right] \right\|_{{\mathcal{H}_{k}}}^2 \right\}
\\
& = \frac{1}{n^2}  \mathbb{E} \left\{ \sum_{i=1}^n \left\| k(X_i,\cdot) - \mu_{P^0} \right\|_{{\mathcal{H}_{k}}}^2  + 2 \sum_{1\leq i<j \leq n} \left<k(X_i,\cdot) - \mu_{P^0},k(X_j,\cdot) -\mu_{P^0} \right>_{\mathcal{H}_k} \right\}.
\end{align*}
We upper bound the first term in the right-hand side thanks to~\eqref{var.RKHS}, while we remark that the second term exactly matches the definition of $\varrho_{|i-j|}$. This leads to:
\begin{align*}
\mathbb{E} \left[ \mathbb{D}_k^2\left( \hat{P}_n,P^0 \right) \right] 
& \leq \frac{1}{n^2}  \mathbb{E} \left\{ \sum_{i=1}^n \left\| k(X_i,\cdot) \right\|_{{\mathcal{H}_{k}}}^2  +  2 \sum_{1\leq i<j \leq n} \varrho_{|i-j|} \right\}
\\
& = \frac{1}{n^2}  \mathbb{E} \left\{ \sum_{i=1}^n k(X_i,X_i)   +  2 \sum_{1\leq i<j \leq n} \varrho_{|i-j|} \right\}.
\end{align*}
As we assumed in all the paper that $|k(x,x')| \leq 1$ for any $(x,x')\in\mathbb{X}^2$, we obtain:
$$
\mathbb{E} \left[ \mathbb{D}_k^2\left( \hat{P}_n,P^0 \right) \right] 
 \leq \frac{1}{n^2} \left( n  + 2 \sum_{1\leq i<j \leq n} \varrho_{|i-j|}\right)
 = \frac{  1 + 2 \sum_{t=1}^n \left(1-\frac{t}{n}\right)\varrho_t }{n}.
$$
\end{proof}
Note that in the i.i.d case, this leads to
$$  \mathbb{E} \left[ \mathbb{D}_k^2\left( \hat{P}_n,P^0 \right) \right] \leq \frac{1}{n} $$
and thus
$$  \mathbb{E} \left[ \mathbb{D}_k\left( \hat{P}_n,P^0 \right) \right] \leq \sqrt{ \mathbb{E} \left[ \mathbb{D}_k^2\left( \hat{P}_n,P^0 \right) \right]} \leq \frac{1}{\sqrt{n}} .$$
The rate $1/\sqrt{n}$ is known to be minimax in this case: Theorem 1 in~\cite{Tol2017}.

\subsection{Proof of Theorem~\ref{theorem:mmd:1}}

\begin{proof}
 First,
 $$ \mathbb{D}_k\left(P_{\hat{\theta}_n},P^0 \right) \leq \mathbb{D}_k\left(P_{\hat{\theta}_n},\hat{P}_n \right) + \mathbb{D}_k\left(\hat{P}_n,P^0 \right) \leq \mathbb{D}_k\left(P_{\theta},\hat{P}_n \right) + \mathbb{D}_k\left(\hat{P}_n,P^0 \right)   $$
 for any $\theta\in\Theta$, by definition of $\hat{\theta}_n$, and thus, using the triangle inequality again,
 $$ \mathbb{D}_k\left(P_{\hat{\theta}_n},P^0 \right)  \leq \mathbb{D}_k \left(P_{\theta},P^0 \right) + 2 \mathbb{D}_k\left(\hat{P}_n,P^0 \right).   $$
 Take the expectation on both sides and note that
 $$ \mathbb{E} \left[\mathbb{D}_k\left(\hat{P}_n,P^0 \right)\right] \leq \sqrt{ \mathbb{E} \left[\mathbb{D}_k^2\left(\hat{P}_n,P^0 \right)\right]} \leq \sqrt{\frac{  1 + 2 \sum_{t=1}^n \left(1-\frac{t}{n}\right)\varrho_t }{n}} \leq \sqrt{\frac{  1 + 2 \sum_{t=1}^n \varrho_t }{n}}  $$
 where the second inequality is given by Lemma~\ref{lemma:mmd:1}.
\end{proof}

\subsection{Proof of Theorem~\ref{theorem:mmd:briol:improved}}

We start by reminding the following result from~\cite{Briol2019}; similar results can be found in~\cite{Song2008} or~\cite{Gretton2009}.
\begin{lemma}[Lemma 1 page 10~\cite{Briol2019}]
 \label{lemma:mmd:briol}
 For any $\delta>0$,
 $$ \mathbb{P} \left( \mathbb{D}_k\left( \hat{P}_n,P^0 \right) \leq \frac{1}{\sqrt{n}} \left(1+\sqrt{\log\left(1/\delta\right)} \right) \right) \geq 1-\delta.$$
\end{lemma}
This result (that we won't use here) relies on McDiarmid inequality \cite{McDo} who proposed a beautiful way to control the difference between a function of the data, $f(X_1,\dots,X_n)$, and its expectation. The idea relies on writing this function as a martingale, $f(X_1,\dots,X_n)=M_n$ where $M_t$, for $t\leq n$, is given by $M_t = \mathbb{E}[f(X_1,\dots,X_n)|X_{1},\dots,X_t]$, and controling the martingale increments. It appears that many inequalities can be proven by using this technique, this is discussed in details in Chapter 3 in~\cite{BoucheronConcentration}. Using this technique, Rio~\cite{McDoRIO} proved a version of McDiarmid's inequality for series satisfying Assumption~\ref{asm:gamma:mixing} (note that the paper is written in French, a more recent paper by the same author~\cite{McDoRIO2} in English contains this result and new ones). We start by reminding Rio's result.
\begin{lemma}[Theorem 1 page~906~\cite{McDoRIO}]
\label{lemma:rio} Under Assumption~\ref{asm:gamma:mixing},
assume that $\mathcal{H}_k^n \rightarrow \mathbb{R}$ satisfies:
$$ \Bigl| f(a_1,\dots,a_n) - f(a_1',\dots,a_n') \Bigr| \leq \sum_{i=1}^n \|a_i-a_i'\|_{\mathcal{H}_k}.$$
Then, for any $t >0$,
$$ \mathbb{E}\exp\biggl[ t f(\mu_{\delta_{X_1}},\dots,\mu_{\delta_{X_1}}) - t \mathbb{E}[f(\mu_{\delta_{X_1}},\dots,\mu_{\delta_{X_1}})]  \biggr] \leq \exp\left( \frac{t^2 (1+\Gamma)^2 n}{2} \right). $$
\end{lemma}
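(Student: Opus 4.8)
The plan is to prove this exponential moment bound by the classical Doob martingale decomposition combined with Hoeffding's lemma, letting the weak-dependence coefficients enter only through a uniform control of the conditional range of each martingale increment. Write $Z = f(\mu_{\delta_{X_1}},\dots,\mu_{\delta_{X_n}})$ and $\mathcal{F}_k = \sigma(X_1,\dots,X_k)$, with $\mathcal{F}_0$ trivial, and set $D_k = \mathbb{E}[Z\mid\mathcal{F}_k] - \mathbb{E}[Z\mid\mathcal{F}_{k-1}]$. This gives the telescoping identity $Z - \mathbb{E}[Z] = \sum_{k=1}^n D_k$, where each $D_k$ is a martingale difference, $\mathbb{E}[D_k\mid\mathcal{F}_{k-1}] = 0$. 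Peeling off the exponential from the last index via the tower property,
\[
\mathbb{E}\exp\!\Big(t\sum_{k=1}^n D_k\Big)
= \mathbb{E}\Big[\exp\!\Big(t\sum_{k=1}^{n-1} D_k\Big)\,\mathbb{E}\big[e^{tD_n}\mid\mathcal{F}_{n-1}\big]\Big],
\]
and iterating this from $k=n$ down to $k=1$ reduces everything to a uniform bound on the conditional moment generating functions $\mathbb{E}[e^{tD_k}\mid\mathcal{F}_{k-1}]$.

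For each $k$, since $\mathbb{E}[D_k\mid\mathcal{F}_{k-1}] = 0$, Hoeffding's lemma yields $\mathbb{E}[e^{tD_k}\mid\mathcal{F}_{k-1}] \leq \exp(t^2 R_k^2/8)$ as soon as $D_k$ lies, conditionally on $\mathcal{F}_{k-1}$, in an interval of length $R_k$. Writing $\phi_k(x_1,\dots,x_k) = \mathbb{E}[Z\mid X_1 = x_1,\dots,X_k = x_k]$, the conditional range of $D_k$ is just the oscillation of $\phi_k(X_1,\dots,X_{k-1},\,\cdot\,)$ in its last argument, so the whole theorem rests on establishing $\sup_{x_k,x_k'} | \phi_k(x_1,\dots,x_{k-1},x_k) - \phi_k(x_1,\dots,x_{k-1},x_k') | \leq 2(1+\Gamma)$ uniformly. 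Granting this, each factor is at most $\exp(t^2\,4(1+\Gamma)^2/8) = \exp(t^2(1+\Gamma)^2/2)$, and multiplying the $n$ factors produces exactly $\exp(t^2(1+\Gamma)^2 n/2)$.

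To get the range bound I would freeze the past and split the effect of $X_k$ into a direct and an indirect part. Fixing $x_1,\dots,x_{k-1}$ and a value $y$ of the $k$-th argument, introduce the future function $h_y(a_{k+1},\dots,a_n) = f(\mu_{\delta_{x_1}},\dots,\mu_{\delta_{x_{k-1}}},\mu_{\delta_{y}},a_{k+1},\dots,a_n)$, which is $1$-Lipschitz for the sum-of-norms metric of Assumption~\ref{asm:gamma:mixing}, so that $\phi_k(x_1,\dots,x_{k-1},y) = \mathbb{E}[h_y(\mu_{\delta_{X_{k+1}}},\dots,\mu_{\delta_{X_n}})\mid X_1 = x_1,\dots,X_k = y]$. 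Then the difference for $x_k$ versus $x_k'$ decomposes as
\[
\underbrace{\mathbb{E}[(h_{x_k}-h_{x_k'})(\mathrm{fut.})\mid X_1=x_1,\dots,X_k=x_k]}_{\text{direct}}
+ \underbrace{\mathbb{E}[h_{x_k'}(\mathrm{fut.})\mid X_k=x_k] - \mathbb{E}[h_{x_k'}(\mathrm{fut.})\mid X_k=x_k']}_{\text{indirect}} .
\]
The direct term is controlled pointwise by the $1$-Lipschitzness of $f$ in its $k$-th coordinate, $|h_{x_k}-h_{x_k'}| \leq \|\mu_{\delta_{x_k}} - \mu_{\delta_{x_k'}}\|_{\mathcal{H}_k} \leq 2$ since $\|\mu_{\delta_x}\|_{\mathcal{H}_k}^2 = k(x,x)\leq 1$. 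The indirect term compares two conditional expectations of the \emph{same} $1$-Lipschitz future function under conditionings that differ only in the value of $X_k$; applying Assumption~\ref{asm:gamma:mixing} to $h_{x_k'}$, each conditional expectation differs from its unconditional value by at most $\sum_{\ell\geq 1}\gamma_\ell = \Gamma$, so the indirect term is at most $2\Gamma$. Adding the two gives $R_k \leq 2(1+\Gamma)$, as needed.

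The hard part will be the indirect term: one must convert the weak-dependence hypothesis into a quantitative bound on how much the conditional law of $(X_{k+1},\dots,X_n)$ shifts when only $X_k$ is altered, and this is precisely what Assumption~\ref{asm:gamma:mixing} is built to deliver, the cleanest route being a coupling of the two conditional laws that realizes the $\gamma$-coefficient bound. By contrast, the measurability of the conditional kernels $\phi_k$, the existence of regular conditional distributions in the Polish setting, and the applicability of Hoeffding's lemma to the bounded centered increments are routine and I would cite them as standard facts rather than develop them.
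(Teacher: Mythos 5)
The paper offers no proof of this lemma at all: it is imported verbatim from Rio (2000), and the text surrounding it describes exactly the technique you use, namely writing $f(\mu_{\delta_{X_1}},\dots,\mu_{\delta_{X_n}})$ as a Doob martingale and controlling the increments. Your reconstruction is correct and is essentially Rio's own argument: the conditional range bound $2(1+\Gamma)$, obtained from the direct part ($\leq 2$, since $\|\mu_{\delta_x}-\mu_{\delta_{x'}}\|_{\mathcal{H}_k}\leq 2$ when $|k|\leq 1$) plus the indirect part ($\leq 2\Gamma$, by applying Assumption~\ref{asm:gamma:mixing} to the $1$-Lipschitz future function at both conditioning values and using the triangle inequality through the unconditional expectation --- no coupling construction is actually needed, the assumption delivers the bound directly, and the only residual wrinkle is the null-set issue over the uncountable family of frozen-past functions, which you rightly flag as routine via regular conditional distributions), combined with conditional Hoeffding, gives $\exp(t^2 4(1+\Gamma)^2/8)$ per increment and hence $\exp(t^2(1+\Gamma)^2 n/2)$ exactly as stated.
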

This allows us to state our variant of Lemma~\ref{lemma:mmd:briol}.
\begin{lemma}
 \label{lemma:mmd:nous}
 Under Assumptions~\ref{asm:our:mixing} and~\ref{asm:gamma:mixing},
 $$ \mathbb{P}\left(  \mathbb{D}_k\left( \hat{P}_n ,P^0 \right) \leq \frac{\sqrt{1+2\Sigma} + (1+\Gamma)\sqrt{2\log\left(\frac{1}{\delta}\right)} }{\sqrt{n}} \right) \geq 1-\delta. $$
\end{lemma}
\begin{proof}[Proof of Lemma~\ref{lemma:mmd:nous}]
Define
$$ f(a_1,\dots,a_n) = \left\|\sum_{i=1}^n (a_i - \mu_{P^0}) \right\|_{\mathcal{H}_k}. $$
For any $x>0$ and any $t>0$,
\begin{align*}
 \mathbb{P}\Biggl( & \mathbb{D}_k\left( \hat{P}_n ,P^0 \right) - \mathbb{E}[\mathbb{D}_k\left( \hat{P}_n ,P^0 \right) ] \geq x \Biggr)
 \\
 & =  \mathbb{P}\left( \frac{f(\mu_{\delta_{X_1}},\dots,\mu_{\delta_{X_n}})}{n}  - \mathbb{E}\left[\mathbb{D}_k\left( \hat{P}_n ,P^0 \right) \right] \geq x \right)
 \\
 & \leq  \mathbb{E}\exp \left( t \frac{f(\mu_{\delta_{X_1}},\dots,\mu_{\delta_{X_n}})}{n}  -  t \mathbb{E}\left[\mathbb{D}_k\left( \hat{P}_n ,P^0 \right) \right] - t x \right) \text{ by Markov inequality}
 \\
 & \leq \exp\left( \frac{t^2 (1+\Gamma)^2 }{2n} - tx \right) \text{ by Lemma~\ref{lemma:rio}}
 \\
 & =  \exp\left( - \frac{x^2 n}{2 (1+\Gamma)^2 }  \right)
\end{align*}
where we chose $t=xn/(1+\Gamma)^2$. Put $x = (1+\Gamma)\sqrt{2\log(1/\delta)/n}$ to get:
$$ \mathbb{P}\left(  \mathbb{D}_k\left( \hat{P}_n ,P^0 \right) \leq  \mathbb{E}[\mathbb{D}_k\left( \hat{P}_n ,P^0 \right) ] + (1+\Gamma) \sqrt{\frac{2\log\left(\frac{1}{\delta}\right)}{n}} \right) \geq 1-\delta. $$
Use Theorem~\ref{lemma:mmd:1} to upper bound the expectation in the right-hand side. This gives the claimed result:
$$ \mathbb{P}\left(  \mathbb{D}_k\left( \hat{P}_n ,P^0 \right) \leq \sqrt{\frac{1+2\Sigma}{n}} + (1+\Gamma) \sqrt{\frac{2\log\left(\frac{1}{\delta}\right)}{n}} \right) \geq 1-\delta. $$
\end{proof}

We are now in position to prove Theorem~\ref{theorem:mmd:briol:improved}.
\begin{proof}[Proof of Theorem~\ref{theorem:mmd:briol:improved}]
 With probability $1-\delta$, for any $\theta\in\Theta$,
 \begin{align*}
  \mathbb{D}_k\left( P_{\hat{\theta}_n},P^0 \right)
  & \leq \mathbb{D}_k\left( P_{\hat{\theta}_n},\hat{P}_n \right)
         + \mathbb{D}_k\left( \hat{P}_n ,P^0 \right) \text{ (triangle inequality)}
  \\
  & \leq  \mathbb{D}_k\left( P_{\theta},\hat{P}_n \right) + \frac{\sqrt{1+2\Sigma} + (1+\Gamma)\sqrt{2\log\left(\frac{1}{\delta}\right)} }{\sqrt{n}} \text{ (definition of $\hat{\theta}_n$ and Lemma~\ref{lemma:mmd:nous})}
  \\
  & \leq  \mathbb{D}_k\left( P_{\theta},P^0 \right) + \mathbb{D}_k\left( \hat{P}_n ,P^0 \right) + \frac{\sqrt{1+2\Sigma} + (1+\Gamma)\sqrt{2\log\left(\frac{1}{\delta}\right)} }{\sqrt{n}}\text{ (triangle inequality)}
  \\
  & \leq  \mathbb{D}_k\left( P_{\theta},P^0 \right)
        +  2 \frac{\sqrt{1+2\Sigma} + (1+\Gamma)\sqrt{2\log\left(\frac{1}{\delta}\right)} }{\sqrt{n}}\text{ (Lemma~\ref{lemma:mmd:nous})}
 \end{align*}
\end{proof}

\subsection{Proof of Lemma~\ref{lemma:huber} and of Proposition~\ref{prop:adversarial}}

\begin{proof}[Proof of Lemma~\ref{lemma:huber}]
We have
\begin{align*}
\left| \mathbb{D}_k(P_{\theta},P^0) - \mathbb{D}_k (P_{\theta},P_{\theta_0}) \right|
& \leq  \mathbb{D}_k(P^0,P_{\theta_0})
\\
& = \left\| (1-\varepsilon)\mu_{P_{\theta_0}} + \varepsilon \mu_Q - \mu_{P_{\theta_0}} \right\|_{{\mathcal{H}_{k}}} \text{ (definition of $P^0$)}
\\
& =  \left\| \varepsilon (\mu_Q -\mu_{P_{\theta_0}}) \right\|_{{\mathcal{H}_{k}}}
\\
& \leq \varepsilon (\left\|\mu_Q \right\|_{{\mathcal{H}_{k}}} + \left\|\mu_{P_{\theta_0}} \right\|_{{\mathcal{H}_{k}}}) \text{ (triangle inequality)}
\\
& \leq  2 \varepsilon.
\end{align*}
\end{proof}

\begin{proof}[Proof of Proposition~\ref{prop:adversarial}]
 Let us put
 $$ \tilde{P}_n = \frac{1}{n}\sum_{i=1}^n \delta_{\tilde{X}_i}. $$
First, note that for any probability measure $Q$,
\begin{align}
   \left| \mathbb{D}_k(Q,\tilde{P}_n) - \mathbb{D}_k(Q,\hat{P}_n) \right|
   & \leq \mathbb{D}_k(\hat{P_n},\tilde{P}_n)
   \nonumber
   \\
   & =  \left\|\frac{1}{n}\sum_{i=1}^n \left( k(X_i,\cdot)- k(\tilde{X}_i,\cdot) \right)\right\|_{{\mathcal{H}_{k}}}
   \nonumber
   \\
   & \leq  \frac{1}{n}\sum_{i=1}^n \left\| k(X_i,\cdot)- k(\tilde{X}_i,\cdot) \right\|_{{\mathcal{H}_{k}}}
   \nonumber
   \\
   & = \frac{1}{n}\sum_{i\in\mathcal{O}} \left\| k(X_i,\cdot)-k(\tilde{X}_i,\cdot) \right\|_{{\mathcal{H}_{k}}}
   \nonumber
   \\
   & \leq \frac{2|\mathcal{O}|}{n}
   \nonumber
   \\
   & \leq 2\varepsilon.
   \label{proof.adversarial.1}
\end{align}
Consider $Q = P_{\tilde{\theta}_n}$. Then:
\begin{align*}
 \mathbb{D}_k(P_{\tilde{\theta}_n},P^0)
 & \leq  \mathbb{D}_k(P_{\tilde{\theta}_n},\tilde{P}_n) + \mathbb{D}_k(\tilde{P}_n,P^0)
 \\
 & \leq \mathbb{D}_k(P_{\hat{\theta}_n},\tilde{P}_n) + \mathbb{D}_k(\tilde{P}_n,P^0) \text{ by definition of } \tilde{\theta}_n
 \\
 & \leq \left[2\varepsilon + \mathbb{D}_k(P_{\hat{\theta}_n},\hat{P}_n)\right]  + \left[2\varepsilon + \mathbb{D}_k(\hat{P_n},P^0)\right]
\end{align*}
where we used~\eqref{proof.adversarial.1} with $Q = P_{\hat{\theta}_n}$ and then $Q = P^0$ respectively. So:
\begin{align*}
 \mathbb{D}_k(P_{\tilde{\theta}_n},P^0)
 & \leq 4\varepsilon +  \mathbb{D}_k(P_{\hat{\theta}_n},\hat{P}_n) + \mathbb{D}_k(\hat{P_n},P^0)
 \\
 & \leq 4\varepsilon +  \mathbb{D}_k(P_{\theta_0},\hat{P}_n) + \mathbb{D}_k(\hat{P_n},P^0)  \text{ by definition of } \hat{\theta}_n
 \\
 & = 4\varepsilon + 2 \mathbb{D}_k(\hat{P_n},P^0)
\end{align*}
as it is here assumed that $P^0 =P_{\theta_0}$. 
\end{proof}

\section{Conclusion}

Parametric estimation with MMD provides a simple way to define universally consistent, robust estimators. In many settings, these estimators also have optimal rates of convergence. The computation of the MMD-based estimator can generally be done through a stochastic gradient descent. We thus believe that it is a practically reasonable and nice alternative to many robust estimation procedures.

Interestingly, Proposition~\ref{prop:ex:gauss} provides a natural calibration to the kernel parameter, which is usually a problem in practice. However, in more general settings, the calibration of this parameter, and the choice of the kernel, remain important open questions.

The application of this method to more sophisticated models in statistics and in machine learning (time series models, regression) should be investigated in details and will be the object of future works. In particular, the coefficients $\varrho_t$ of Definition~\ref{dfn.varrho} are new to our knowledge and it would be interesting to compare them to more weak dependence coefficients\footnote{In a personal communication, George Wynne (Imperial College London) proved a connection to yet another notion of dependence: $L^p$-$m$-approximability, used for functional time series~\cite{Hormann2010}. He proved that if $(X_t)_{t\in\mathbb{Z}}$ is $L^p$-$m$-approximable then $\sum_{t=1}^\infty \varrho_t < +\infty$.}.

\section*{Acknowledgements}
 
We would like to thank Guillaume Lecu\'e (ENSAE Paris) for his helpful comments, Mathieu Gerber (University of Bristol) who fixed a mistake in the constants in Proposition~\ref{prop:ex:gauss}, and George Wynne (Imperial College) for his very informative comments on the coefficients $\varrho_t$. We also would like to thank the anonymous Referees and the Associate Editor for their insightful comments that helped to improve the structure of the paper. All the remaining mistakes are ours.

\bibliographystyle{apalike}

\newpage

\section*{Supplementary material}

\subsection{Proofs of Section~\ref{sec:examples}}

\begin{proof}[Proof of Proposition~\ref{prop:ex:gauss}]
 We remind that $P_\theta = \mathcal{N}(\theta,\sigma^2 I_d)$ where $\theta\in\Theta=\mathbb{R}^d$.
When $X$ and $Y$ are independent, respectively from $P_\theta$ and $P_{\theta'}$, we have $(X-Y)\sim \mathcal{N}(\theta-\theta',\sigma^2 I_d)$. Thus,
$$ \frac{(X-Y)}{\sqrt{2\sigma^2}} \sim \mathcal{N}\left( \frac{(\theta-\theta')}{\sqrt{2\sigma^2 }},I_d \right) $$
and thus the square of this random variable is a noncentral chi-square random variable:
$$ \frac{\| X-Y \|^2}{2\sigma^2} \sim \chi^2\left( d,  \frac{\| \theta-\theta' \|^2}{2\sigma^2} \right). $$
It is known that when $U\sim \chi^2(d,m)$ we have $ \mathbb{E}[\exp(tU)] = \exp(mt/(1-2t)) /(1-2t)^{d/2} $. Taking $t=-(2\sigma^2)/\gamma^2$, this leads to
\begin{equation*}
\left<\mu_{P_\theta},\mu_{P_{\theta'}} \right>_{\mathcal{H}} = \mathbb{E}_{X\sim P_\theta,Y\sim P_{\theta'}}\left[\exp\left(-\frac{\|X-Y\|^2}{\gamma^2}\right)\right]
= \left(\frac{\gamma^2}{4\sigma^2 + \gamma^2}\right)^{\frac{d}{2}} \exp\left( -\frac{\|\theta-\theta'\|^2}{4\sigma^2 + \gamma^2} \right)
\end{equation*}
and thus
\begin{equation}
\label{eq:mmd:gaussian}
\mathbb{D}_{k_\gamma}^2(P_\theta,P_{\theta'}) 
= 2 \left(\frac{\gamma^2}{4\sigma^2 + \gamma^2}\right)^{\frac{d}{2}} \left[ 1 - \exp\left( -\frac{\|\theta-\theta'\|^2}{4\sigma^2 + \gamma^2} \right) \right].
\end{equation}
From~\eqref{eq:mmd:gaussian} and Proposition~\ref{prop:adversarial}, we obtain, with probability at least $1-\delta$,
  $$ 
  2 \left(\frac{\gamma^2}{4\sigma^2 + \gamma^2}\right)^{\frac{d}{2}} \left[ 1 - \exp\left( -\frac{\|\theta-\tilde{\theta}_n\|^2}{4\sigma^2 + \gamma^2} \right) \right]
  = \mathbb{D}^2_k\left( P_{\tilde{\theta}_n},P_{\theta_0}\right) \leq  16 \left( \varepsilon 
 +  \frac{1 + \sqrt{2\log\left(\frac{1}{\delta}\right)} }{\sqrt{n}} \right)^2,
$$
that is,
$$
\|\theta-\tilde{\theta}_n\|^2
\leq - (4\sigma^2 + \gamma^2)
 \log\left\{ 1-8 \left(\frac{4\sigma^2 + \gamma^2}{\gamma^2}\right)^{\frac{d}{2}} \left( \varepsilon 
 +  \frac{1 + \sqrt{2\log\left(\frac{1}{\delta}\right)} }{\sqrt{n}} \right)^2 \right\}.
$$
Now, use inside the $\log$ the inequality $(1+1/x)^x \leq {\rm e}$ on $x=\gamma^2/(4\sigma^2)$ to get:
$$
\|\theta-\tilde{\theta}_n\|^2
\leq - (4\sigma^2 + \gamma^2)
 \log\left\{ 1-8 {\rm e}^{\frac{2\sigma^2 d}{\gamma^2}} \left( \varepsilon 
 +  \frac{1 + \sqrt{2\log\left(\frac{1}{\delta}\right)} }{\sqrt{n}} \right)^2 \right\}.
$$
This proves~\eqref{ex:gauss:3}. Simply plug $\gamma=\sigma \sqrt{2d}$ to get the second inequality.
\end{proof}

The Gaussian example appears to be a very special case, where it is possible to derive an explicit formula for the MMD distance. In most models, this is not possible. However, in many models, we observed that it is actually possible to provide an explicit formula for the $L^2$ distance, of the form $ \| \theta-\theta' \|^2 = F( \|p_{\theta} - p_{\theta'} \|_{L^2}^2 ) $, where  $P_\theta$ has a density $p_\theta\in L^2$ with respect to the Lebesgue measure and $F$ is a nondecreasing function. It is then tempting to use the connection between the MMD distance and the $L^2$ distance mentioned in Remark~\ref{rmk:L2} and in~\cite{L2}. The scheme of the proof is as follows:
$$ \| \hat{\theta} - \theta_0 \|^2 =
 F\left( \|p_{\hat{\theta}} - p_{\theta_0} \|^2_{L^2} \right)
 \underbrace{\leq}_{?}
 F\left( c \mathbb{D}_k^2 (P_{\hat{\theta}},P_{\theta_0}) \right)
 \underbrace{\leq}_{
\begin{tiny}
\begin{array}{c}
 \text{Theorem~\ref{theorem:mmd:1}}
 \\
 \text{Theorem~\ref{theorem:mmd:briol:improved}}
 \\
 \text{Proposition~\ref{prop:adversarial}}
 \\
 ...
\end{array}
\end{tiny}
}
F(\text{bound})
 $$
which means that in such models, the only new step to get a rate of convergence on $\hat{\theta}$ is to check the condition
\begin{equation}
\label{cond:L2}
 \|p_{\theta} - p_{\theta'} \|_{L^2}^2 \leq c \mathbb{D}_k^2 (P_{\theta},P_{\theta'})
\end{equation}
for some $c>0$. Finally, in any case where $k_\gamma(x,x') = K (\|x-x'\| / \gamma)$, let $\mu$ denote the Fourier transform of $K$:
$$ \mu(t) := \mathcal{F}[K](t)
=  \int K(x) \exp(-2i\pi \left<t,x\right>) {\rm d}x  .$$
Note that for the Gaussian kernel in $\mathbb{R}^d$, $K(u) = \exp(-\|u\|^2)$, $\mu(t) = \pi^{d/2} \exp(-\|t\|^2/4) $.
We remind a few properties of the Fourier transform. First,
\begin{equation}
\label{prop.fourier.1}
\mathcal{F}[K(\cdot/\gamma)](t) = \gamma^d \mathcal{F}[K](\gamma t) = \gamma^d \mu(\gamma t).
\end{equation}
Let $\star$ denote the convolution product:
$$ p\star q(x) = \int p(x-t) q(t) {\rm d}t $$
and we remind the classical result
\begin{equation}
\label{prop.fourier.2}
\mathcal{F}[p\star q] = \mathcal{F}[p] \mathcal{F}[q].
\end{equation}
Finally, we remind that
\begin{equation}
\label{prop.fourier.3}
\int p(x) \overline{q(x)}{\rm d}x = \int \mathcal{F}[p](t) \overline{\mathcal{F}[q](t)} {\rm d}t .
\end{equation}
Then, we have:
 \begin{align*}
 \mathbb{D}_{k_\gamma}^2(P_\theta,P_{\theta'})
 & = \iint K(\|x-y\|/\gamma) [p_\theta(x)-p_{\theta'}(x)][p_\theta(y)-p_{\theta'}(y)]{\rm d}x {\rm d}y
 \\
 & = \int [K(\cdot/\gamma) \star (p_\theta-p_{\theta'})](y)[p_\theta(y)-p_{\theta'}(y)] {\rm d}y \text{, by definition of } \star
 \\
 & = \int [K(\cdot/\gamma) \star (p_\theta-p_{\theta'})](y)\overline{[p_\theta(y)-p_{\theta'}(y)]} {\rm d}y \text{, (densities are real-valued) }
 \\
 & = \int \mathcal{F}[K(\cdot/\gamma)\star (p_\theta-p_{\theta'})](t) \overline{\mathcal{F}[p_\theta-p_{\theta'}](t)} {\rm d}t \text{, by~\eqref{prop.fourier.3}}
 \\
 & = \int \mathcal{F}[K(\cdot/\gamma)] \mathcal{F}[p_\theta-p_{\theta'}](t) \overline{\mathcal{F}[p_\theta-p_{\theta'}](t)} {\rm d}t \text{, by~\eqref{prop.fourier.2}}
 \\
 & = \int \gamma^d \mu(\gamma t) \left| \mathcal{F}[p_\theta-p_{\theta'}](t)\right|^2  {\rm d}t
 \text{, by~\eqref{prop.fourier.1}}.
\end{align*}
So, an alternative way to check~\eqref{cond:L2} is to check that:
$$
\|p_{\theta} - p_{\theta'} \|_{L^2}^2 \leq c  \int \gamma^d \mu(\gamma t) \left| \mathcal{F}[p_\theta-p_{\theta'}](t)\right|^2  {\rm d}t.
$$

\begin{proof}[Proof of Proposition~\ref{prop:ex:cauchy}] We have
$$ \left<p_\theta,p_{\theta'}\right>_{L^2} = \frac{2}{\pi[(\theta-\theta')^2+4]} \Rightarrow \|p_\theta - p_{\theta'} \|_{L^2}^2 = \frac{1}{\pi}\left(1-\frac{1}{\frac{(\theta-\theta')^2}{4}+1}\right). $$
We now use the above remarks and check~\eqref{cond:L2}. Note that
$$ \mathcal{F}[p_\theta-p_{\theta'}](t) = [\exp(-it\theta)-\exp(-it\theta')]\exp(-|t|) $$
and so
\begin{align*}
\frac{\mathbb{D}^2_{k_\gamma}(P_\theta,P_\theta')}{\|p_\theta-p_\theta'\|_{L^2}^2}
& = \frac{\pi \int \gamma \mu(\gamma t) \left|[\exp(-it\theta)-\exp(-it\theta')]\exp(-|t|)\right|^2 }{1-\frac{1}{\frac{(\theta-\theta')^2}{4}+1}}
\\
& \geq \frac{\pi \int \gamma \mu(\gamma t) \left|[\exp(-it\theta)-\exp(-it\theta')]\exp(-t^2 -1)\right|^2 }{1-\frac{1}{\frac{(\theta-\theta')^2}{4}+1}}
\end{align*}
thanks to $|x| \leq x^2+1$. Identify the Fourier transform of the Gaussian density on the numerator to get
\begin{align*}
\frac{\mathbb{D}^2_{k_\gamma}(P_\theta,P_\theta')}{\|p_\theta-p_\theta'\|_{L^2}^2}
& = \frac{2 \pi \left[1-\exp\left(-\frac{\|\theta-\theta'\|^2}{2(\gamma+2)}\right)\right] }{\exp(2)\sqrt{1+\frac{2}{\gamma}}\left(1-\frac{1}{\frac{(\theta-\theta')^2}{4}+1}\right)}
\\
& = \frac{ 2 \pi  \left[1-\exp\left(-\frac{\|\theta-\theta'\|^2}{8}\right)\right] }{\exp(2) \sqrt{2} \left(1-\frac{1}{\frac{(\theta-\theta')^2}{4}+1}\right)}
\end{align*}
when $\gamma=2$. For $x>0$ we prove easily that
$$ \frac{1-\exp(-x/2) }{1-\frac{1}{1+x}} > \frac{1}{2}, $$
so
$$
\frac{\mathbb{D}^2_{k_\gamma}(P_\theta,P_\theta')}{\|p_\theta-p_\theta'\|_{L^2}^2}
 \geq \frac{ \pi }{\sqrt{2} \exp(2)   }\geq \frac{1}{4}.
$$
Now, in the adversarial contamination case,
\begin{align*}
(\hat{\theta}_n - \theta_0 )^2
& = 4\left[ 1 - \frac{1}{1-\pi \|p_{\tilde{\theta}}-p_{\theta_0}\| ^2 } \right]
\\
& \leq 4\left[ 1 - \frac{1}{1-4 \pi \mathbb{D}^2_{k_{\gamma}}(P_{\tilde{\theta}},P_{\theta_0}) } \right]
\\
& \leq 4\left[ 1 - \frac{1}{1-128 \pi \left( \varepsilon^2 + \frac{2 + 4\log(1/\delta) }{n} \right) } \right]
\end{align*}
where the last inequality comes from Proposition~\ref{prop:adversarial}.
\end{proof}

\begin{rmk}
We applied the technique to the translated Cauchy model. Note that it is possible to apply it to many other translation models. However, in the translated uniform model (not reported in this paper), it leads to a suboptimal rate of convergence: $1/\sqrt{n}$, while the MLE is is $1/n$. But in the simulations we made, we observed that the MMD works very well in the uniform model -- on the condition that $\gamma$ is taken very small. We thus wonder whether it is possible, with another proof technique, to prove that the MMD estimator with $\gamma = \gamma(n) \rightarrow 0$ when $n\rightarrow \infty$ reaches the optimal rate of convergence. This is still an open question.
\end{rmk}

\begin{proof}[Proof of Proposition~\ref{prop:beta}]
\textcolor{red}{
To clarify notations, assume that the process $(X_t)_{t\in\mathbb{Z}}$ is defined on some space $(\Omega,\mathcal{A},\mathbb{P})$. The image of $\mathbb{P}$ through $X_t(\cdot)$ is the marginal distribution of $X_t$, that is $P_t$. The stationarity assumption ensures $P_t=P_0$ for all $t\in\mathbb{Z}$. The image of $\mathbb{P}$ through $(X_0,X_t)$ is their joint distribution, that will be denoted by $P_{0:t}$.}

\textcolor{red}{
We can define another probability distribution $\mathbb{P}_{0 \otimes t}$ on $(\Omega\times\Omega,\sigma(X_0)\otimes \sigma(X_t))$ as the image of $\mathbb{P}$ through $i(\omega)=(\omega,\omega)$. In particular, for $A\in \sigma(X_0)$ and $B\in\sigma(X_t)$, we have $\mathbb{P}_{0 \otimes t}(A\times B)=\mathbb{P}(A\cap B)$. Let $\mathbb{P}_{0}$ denote the restriction of $\mathbb{P}$ to $\sigma(X_0)$ and $\mathbb{P}_{t}$ the restriction of $\mathbb{P}$ to $\sigma(X_t)$. In Section 1.6 in~\cite{mixing2}, it is stated that
$$
\beta(\sigma(X_0),\sigma(X_t)) = \sup_{C\in \sigma(X_0)\otimes \sigma(X_t)} | \mathbb{P}_{0 \otimes t}(C) - \mathbb{P}_{0}\otimes \mathbb{P}_{t} (C)|.
$$
In particular, consider, for a fixed $u>0$, the event: $C(u)=\{u\geq \|X_0-X_t\|\}$. While $C(u)$ does not belong to $ \sigma(X_0)\times \sigma(X_t)$, it can be obtained as a countable union of events in $ \sigma(X_0)\times \sigma(X_t)$, and thus $C(u)\in \sigma(X_0)\otimes \sigma(X_t)$. Thus,
\begin{align*}
\beta(\sigma(X_0),\sigma(X_t))
& \geq  | \mathbb{P}_{0 \otimes t}(C(u)) - \mathbb{P}_{0}\otimes \mathbb{P}_{t} (C(u))|
\\
& = | \mathbb{P}_{0 \otimes t}(\{ u\geq \|X_0-X_t\| \}) - \mathbb{P}_{0}\otimes \mathbb{P}_{t} (u\geq \|X_0-X_t\| )|
\\
& = | P_{0:t}(\{(x,y): u\geq \|x-y\| \}) - P_0\otimes P_t( \{ (x,y): u\geq \|x-y\| \})|.
\end{align*}
Now,
 \begin{align*}
 \varrho_t
 & = \left| \mathbb{E}\left< \mu_{\delta_{X_t}}-\mu_{P^0},\mu_{\delta_{X_0}}-\mu_{P^0} \right>_{\mathcal{H}_k} \right|
 \\
 & = \left| \int  k(x,y) P_{0:t}({\rm d}(x,y)) - \iint k(x,y) P^0({\rm d}x)  P^0({\rm d}y)\right|
 \\
 & =  \left| \int  \left(\int \mathbf{1}_{\{u\geq \|x-y\|\}} f(u) {\rm d}u\right) P_{0:t}({\rm d}(x,y)) - \iint  \left(\int \mathbf{1}_{\{u > \|x-y\|\}}\right) f(u) {\rm d}u P^0({\rm d}x)  P^0({\rm d}y)\right|
 \\
 & = \left| \int_{0}^\infty \left( \int   \mathbf{1}_{\{u \geq  \|x-y\|\}} P_{0:t}({\rm d}(x,y)) - \iint \mathbf{1}_{\{u \geq \|x-y\|\}} \ P^0({\rm d}x)  P^0({\rm d}y)  \right) f(u) {\rm d}u \right|
 \\
 & = \left| \int_{0}^\infty \left(  P_{0:t}(\{(x,y): u\geq \|x-y\| \}) - P_0\otimes P_t( \{ (x,y): u\geq \|x-y\| \}) \right) f(u) {\rm d}u \right|
 \\
 & \leq  \int_{0}^\infty \left|  P_{0:t}(\{(x,y): u\geq \|x-y\| \}) - P_0\otimes P_t( \{ (x,y): u\geq \|x-y\| \}) \right| f(u) {\rm d}u
 \\
 & \leq \int_{0}^\infty \beta(\sigma(X_0),\sigma(X_t)) f(u) {\rm d}u
 \\
 & = \beta(\sigma(X_0),\sigma(X_t))
\end{align*}
which ends the proof.}
\end{proof}

\begin{proof}[Proof of Proposition~\ref{prop:counterexample}]
We start by a few preliminary remarks on $(X_t)$. First, note that, for any $\ell$ and $t$,
$$ X_{\ell+t} = \underbrace{\varepsilon_{\ell+t} + A \varepsilon_{\ell+t-1} + \dots + A^{t-1} \varepsilon_{\ell + 1} }_{=: B_\ell^t} + A^t X_\ell $$
where $B_\ell^t$ is independent of $X_\ell$. Let $P_b^t$ denote the distribution of $B_\ell^{t}$ (it does not depend on $\ell$ because of the stationarity).

Also, note that,
 $$ X_1 = A X_0  + \varepsilon_1 $$
 and thus
 \begin{equation}
 \label{ar:norm} \|X_1\| \leq \|A\| \|X_0\| + \|\varepsilon_1\| .
 \end{equation}
Taking the expectation of~\eqref{ar:norm}, and using the stationarity, gives
 $$ \mathbb{E}(|X_0|) =  \mathbb{E}(|X_1|)  \leq \|A\| \mathbb{E}(\|X_0\|) + \mathbb{E}(\|\varepsilon_1\|) $$
 which leads to
 $$ \mathbb{E}(\|X_0\|) \leq \frac{\mathbb{E}(\|\varepsilon_1\|) }{1-|a|}  = \frac{\mathbb{E}(\|\varepsilon_0\|) }{1-\|a\|}.  $$
 In the same way, if $\|\varepsilon_t\|< c$ almost surely for any $t$, taking the supremum of~\eqref{ar:norm} and using the stationarity gives, almost surely,
 $$ \|X_0\| \leq \frac{c}{1-\|a\|}. $$

We are now ready to start the derivation of the upper bound for $\varrho_t$. We have
 \begin{align*}
 \varrho_t &  =  \left| \int  k(x,y) P_{0:t}({\rm d}(x,y)) - \iint k(x,y) P^0({\rm d}x)  P^0({\rm d}y)\right|
  \\
  & = \left| \iint  k(x,A^tx+b) P^{0}({\rm d}x) P_b^t ({\rm d}b) - \iiint k(x,A^t x'+b) P^0({\rm d}x) P^0({\rm d}x') P_b^t({\rm d}b)\right|
  \\
  & \leq \iiint \left| k(x,A^t  x+b) - k(x,A^t x'+b) \right| P^0({\rm d}x) P^0({\rm d}x') P_b^t({\rm d}b)
  \\
  & \leq \iiint L \left| (A^t x+b) - (A^t x'+b) \right| P^0({\rm d}x) P^0({\rm d}x') P_b^t({\rm d}b)
  \\
  & = \iint L \|A\|^t \left| x-x' \right| P^0({\rm d}x) P^0({\rm d}x') 
  \\
  & \leq \int 2 L \|A\|^t  \left\| x \right\| P^0({\rm d}x)
  \\
  & = 2 L \|A\|^t  \mathbb{E}(\|X_0\|)
  \\
  & \leq \frac{ 2 L \|A\|^t  \mathbb{E}(\|X_0\|)}{1-\|A\|}.
 \end{align*}

Let us now check Assumption~\ref{asm:gamma:mixing}. In order to do so, fix $\ell\in\{1,\dots,n-1\} $ and a function $g:\mathcal{H}_k^\ell \rightarrow \mathbb{R} $ such that
\begin{equation}
\label{eq:hyp:g}
|g(a_1,\dots,a_\ell) - g(b_1,\dots,b_\ell)| \leq \sum_{i=1}^\ell \|a_i - b_i\|_{\mathcal{H}_k} .
\end{equation}
As we know that $\|\varepsilon_t\|\leq c$ a.s, we also have $\|X_t\|\leq c/(1-\|A\|)$. Fix $(x_1,\dots,x_\ell)$ with $\|x_t\|\leq c/(1-\|A\|)$. We have
\begin{align*}
 \Bigl| & \mathbb{E}[g(\mu_{\delta_{X_{\ell+1}}},\dots,\mu_{\delta_{X_{n}}})|X_{1}=x_1,\dots,X_{\ell}=x_\ell]-\mathbb{E}[g(\mu_{\delta_{X_{\ell+1}}},\dots,\mu_{\delta_{X_{n}}})]\Bigr|
 \\
 & =
 \Bigl| \mathbb{E}[g(\mu_{\delta_{A x_{\ell}+B_{\ell}^1}},\dots,\mu_{ \delta_{A^{n-\ell-1} x_{\ell}+B_\ell^{n-\ell-1} }})]-\mathbb{E}[g(\mu_{\delta_{A X_{\ell}+B_{\ell}^1}},\dots,\mu_{ \delta_{A^{n-\ell-1} X_{\ell}+B_{\ell}^{n-\ell-1}}})]\Bigr|
 \\
 & \leq
  \mathbb{E} \left[ \Bigl|g(\mu_{\delta_{A x_{\ell}+B_{\ell}^1}},\dots,\mu_{\delta_{A^{n-\ell-1} x_{\ell}+B_\ell^{n-\ell-1} }})-\mathbb{E}g(\mu_{\delta_{A X_{\ell}+B_{\ell}^1}},\dots,\mu_{\delta_{A^{n-\ell-1} X_{\ell}+B_{\ell}^{n-\ell-1}}})\Bigr|\right]
 \\
 & \leq \mathbb{E} \left[ \sum_{i=1}^{n-\ell-1} \left\| \mu_{\delta_{A^i x_{\ell}+B_{\ell}^i}} - \mu_{\delta_{A^{i} X_\ell + B_\ell^i}} \right\|_{\mathcal{H}_k} \right] \text{ by~\eqref{eq:hyp:g}}
 \\
 & =  \mathbb{E} \left[ \sum_{i=1}^{n-\ell-1} \sqrt{k(A^i x_\ell + B_{\ell}^i,A^i x_\ell + B_{\ell}^i) + k (A^i X_\ell + B_{\ell}^i,A^i X_\ell + B_{\ell}^i) - 2 k (A^i x_\ell + B_{\ell}^i,A^i X_\ell + B_{\ell}^i) } \right].
\end{align*}
We remind the assumption that $k(x,y)=F(|x-y|)$ where $F$ is $L$-Lipschitz. So, in particular:
$$ k(A^i x_\ell + B_{\ell}^i,A^i x_\ell + B_{\ell}^i) - k(A^i X_\ell + B_{\ell}^i,A^i x_\ell + B_{\ell}^i) \leq L \|A\|^i \|x_\ell - X_\ell\| $$
and in the same way, 
$$
 k(A^i X_\ell + B_{\ell}^i,A^i X_\ell + B_{\ell}^i) - k(A^i X_\ell + B_{\ell}^i,A^i x_\ell + B_{\ell}^i) \leq L \|a\|^i \|x_\ell - X_\ell\|, $$
so:
$$
 \Bigl|  \mathbb{E}[g(\mu_{\delta_{X_{\ell+1}}},\dots,\mu_{\delta_{X_{n}}})|X_{1}=x_1,\dots,X_{\ell}=x_\ell]-\mathbb{E}[g(\mu_{\delta_{X_{\ell+1}}},\dots,\mu_{\delta_{X_{n}}})]\Bigr|
   \leq  \mathbb{E} \left[ \sum_{i=1}^{n-\ell} \sqrt{2 L \|A\|^i \|x_\ell - X_\ell\| } \right].
$$
We now use $\|X_t\| \leq c/(1-\|A\|)$ a.s to get:
$$
 \Bigl|  \mathbb{E}[g(\mu_{\delta_{X_{\ell+1}}},\dots,\mu_{\delta_{X_{n}}})|X_{1}=x_1,\dots,X_{\ell}=x_\ell]-\mathbb{E}[g(\mu_{\delta_{X_{\ell+1}}},\dots,\mu_{\delta_{X_{n}}})]\Bigr|
   \leq   \sum_{i=1}^{n-\ell-1} \frac{2 L c \|A\|^{\frac{i}{2}}}{1-\|A\|} = \sum_{i=1}^{n-\ell-1} \gamma_i.
$$
This ends the proof.
\end{proof}

\subsection{Proofs of Section~\ref{sec:algo}}

\begin{proof}[Proof of Proposition~\ref{prop:gradient}]
 Note that we can rewrite
$$
{\rm Crit}(\theta) = \iint k(x,x') p_\theta(x) p_\theta(x') \mu({\rm d}x)\mu({\rm d}x') 
 - \frac{2}{n}\sum_{i=1}^n \int  k(x,X_i) p_\theta(x) \mu({\rm d}x).
$$
The assumption of the proposition ensure that we can interchange the $\nabla$ and $\iint$ symbols, and so
\begin{align*}
\nabla_\theta {\rm Crit}(\theta) & = \iint k(x,x') \nabla_\theta [p_\theta(x) p_\theta(x')] \mu({\rm d}x)\mu({\rm d}x') 
 - \frac{2}{n}\sum_{i=1}^n \int  k(x,X_i) \nabla_\theta [p_\theta(x)] \mu({\rm d}x)
 \\
 & = 2 \iint k(x,x') p_\theta(x) p_\theta(x') \nabla_\theta [\log p_\theta(x) ] \mu({\rm d}x)\mu({\rm d}x') - \frac{2}{n}\sum_{i=1}^n \int  k(x,X_i) \nabla_\theta [\log p_\theta(x)] p_\theta(x) \mu({\rm d}x)
 \\
 & = 2 \mathbb{E}_{X,X'\sim P_\theta} \{k(X,X') \nabla_\theta[\log p_\theta(X) ] \}
- \frac{2}{n}\sum_{i=1}^n \mathbb{E}_{X\sim P_\theta}\{k(X_i,X)\nabla_\theta[\log p_\theta(X) ]\}
\\
& = 2 \mathbb{E}_{X,X'\sim P_\theta} \left\{ \left[ k(X,X') - \frac{1}{n}\sum_{i=1}^n k(X_i,X) \right] \nabla_\theta[\log p_\theta(X) ]\right\}.
\end{align*}
This ends the proof.
\end{proof}

\begin{proof}[Proof of Proposition~\ref{prop:SGA}]
The assumption that $\Theta$ is bounded with radius $\mathcal{D}$ ensures that (2.17) in~\cite{Nemi2009} is satisfied, and the assumption on the expectation of the norm of the gradient ensures that (2.5) in~\cite{Nemi2009} is also satisfied. Thus,~(2.21) is also satisfied, ant that is exactly the statement of our~\eqref{eq:nemi}. Then, we have:
\begin{align*}
 \mathbb{E}\left[ \mathbb{D}_{k}\left(P_{\hat{\theta}^{(T)}_n},P^0\right) \right]
 & \leq  \mathbb{D}_{k}\left(P_{\hat{\theta}^{(T)}_n},\hat{P}_n \right) +  \mathbb{D}_{k}\left(\hat{P}_n,P^0 \right)
 \\
 & = \sqrt{\mathbb{D}_{k}^2\left(P_{\hat{\theta}^{(T)}_n},\hat{P}_n \right)} +  \mathbb{D}_{k}\left(\hat{P}_n,P^0 \right)
 \\
 & \leq \sqrt{\mathbb{D}_{k}^2\left(P_{\hat{\theta}_n},\hat{P}_n \right) + \frac{\mathcal{D}M}{\sqrt{T}}} +  \mathbb{D}_{k}\left(\hat{P}_n,P^0 \right)
\end{align*}
thanks to~\eqref{eq:nemi}. We upper bound the second term thanks to Lemma~\ref{lemma:mmd:1}:
$$ \mathbb{D}_{k}\left(\hat{P}_n,P^0 \right) \leq \sqrt{\frac{1+2\sum_{t=1}^n \varrho_t }{n}}. $$
For the first term, we use:
\begin{align*}
\sqrt{\mathbb{D}_{k}^2\left(P_{\hat{\theta}_n},\hat{P}_n \right) + \frac{\mathcal{D}M}{\sqrt{T}}}
  & \leq \mathbb{D}_{k}\left(P_{\hat{\theta}_n},\hat{P}_n \right) + \sqrt{\frac{\mathcal{D}M}{\sqrt{T}}} \\
  & \leq   \inf_{\theta\in\Theta} \mathbb{D}_{k}\left(P_{\theta},\hat{P}_n \right) + 2 \sqrt{\frac{1+2\sum_{t=1}^n \varrho_t }{n}}+ \sqrt{\frac{\mathcal{D}M}{\sqrt{T}}}
\end{align*}
thanks to Theorem~\ref{theorem:mmd:1}. Putting everything together leads to
$$
 \mathbb{E}\left[ \mathbb{D}_{k}\left(P_{\hat{\theta}^{(T)}_n},P^0\right) \right] \leq \inf_{\theta\in\Theta} \mathbb{D}_{k}\left(P_{\theta},\hat{P}_n \right) + 3 \sqrt{\frac{1+2\sum_{t=1}^n \varrho_t }{n}}+ \sqrt{\frac{\mathcal{D}M}{\sqrt{T}}}
$$
which ends the proof.
\end{proof}

\end{document}